\let\th@plain\relax
\pgfplotsset{compat=newest}
\DeclareFontFamily{U}{MnSymbolA}{}
\DeclareFontShape{U}{MnSymbolA}{m}{n}{
    <-6> MnSymbolA5
    <6-7> MnSymbolA6
    <7-8> MnSymbolA7
    <8-9> MnSymbolA8
    <9-10> MnSymbolA9
    <10-12> MnSymbolA10
    <12-> MnSymbolA12
}{}
\DeclareFontShape{U}{MnSymbolA}{b}{n}{
    <-6> MnSymbolA-Bold5
    <6-7> MnSymbolA-Bold6
    <7-8> MnSymbolA-Bold7
    <8-9> MnSymbolA-Bold8
    <9-10> MnSymbolA-Bold9
    <10-12> MnSymbolA-Bold10
    <12-> MnSymbolA-Bold12
}{}
\DeclareSymbolFont{MnSyA}{U}{MnSymbolA}{m}{n}
\DeclareMathSymbol{\lcirclearrowright}{\mathrel}{MnSyA}{252}
\DeclareMathSymbol{\lcirclearrowdown}{\mathrel}{MnSyA}{255}
\DeclareMathSymbol{\rcirclearrowleft}{\mathrel}{MnSyA}{250}
\DeclareMathSymbol{\rcirclearrowdown}{\mathrel}{MnSyA}{251}
\DeclareFontFamily{U}{MnSymbolC}{}
\DeclareSymbolFont{MnSyC}{U}{MnSymbolC}{m}{n}
\DeclareFontShape{U}{MnSymbolC}{m}{n}{
    <-6>  MnSymbolC5
    <6-7>  MnSymbolC6
    <7-8>  MnSymbolC7
    <8-9>  MnSymbolC8
    <9-10> MnSymbolC9
    <10-12> MnSymbolC10
    <12->   MnSymbolC12%
}{}
\DeclareMathSymbol{\powerset}{\mathord}{MnSyC}{180}
\DeclareMathAlphabet{\mathpzc}{OT1}{pzc}{m}{it}
\def\boolwahr{true}
\def\boolfalsch{false}
\def\boolleer{}
\let\boolinappendix\boolfalsch
\let\boolinmdframed\boolfalsch
\let\eqtagset\boolfalsch
\let\eqtaglabel\boolleer
\let\eqtagsymb\boolleer
\newlength\rtab
\newlength\gesamtlinkerRand
\newlength\gesamtrechterRand
\newlength\ownspaceabovethm
\newlength\ownspacebelowthm
\def\secnumberingpt{.}
\def\secnumberingseppt{.}
\def\subsecnumberingseppt{}
\def\thmnumberingpt{.}
\def\thmnumberingseppt{}
\def\thmForceSepPt{.}
\definecolor{leer}{gray}{1}
\definecolor{boxgrau}{gray}{0.85}
\definecolor{dunkelgrau}{gray}{0.5}
\definecolor{maroon}{rgb}{0.6901961,0.1882353,0.3764706}
\definecolor{dunkelgruen}{rgb}{0.015625,0.363281,0.109375}
\definecolor{dunkelrot}{rgb}{0.5450980392,0,0}
\definecolor{dunkelblau}{rgb}{0,0,0.5450980392}
\definecolor{blau}{rgb}{0,0,1}
\definecolor{newresult}{rgb}{0.6,0.6,0.6}
\definecolor{improvedresult}{rgb}{0.9,0.9,0.9}
\definecolor{hervorheben}{rgb}{0,0.9,0.7}
\definecolor{starkesblau}{rgb}{0.1019607843,0.3176470588,0.8156862745}
\definecolor{achtung}{rgb}{1,0.5,0.5}
\definecolor{frage}{rgb}{0.5,1,0.5}
\definecolor{schreibweise}{rgb}{0,0.7,0.9}
\definecolor{axiom}{rgb}{0,0.3,0.3}
\def\let@name#1#2{
    \expandafter\let\csname #1\expandafter\endcsname\csname #2\endcsname\relax
}
\DeclareRobustCommand\crfamily{\fontfamily{ccr}\selectfont}
\DeclareTextFontCommand{\textcr}{\crfamily}
\def\ifthenelseleer#1#2#3{\ifthenelse{\equal{#1}{}}{#2}{#1#3}}
\def\bedingtesspaceexpand#1#2#3{\ifthenelseleer{\csname #1\endcsname}{#3}{#2#3}}
\def\nvraum{\@ifnextchar\bgroup{\nvraum@c}{\nvraum@bes}}
    \def\nvraum@c#1{\vspace*{-#1\baselineskip}}
    \def\nvraum@bes{\vspace*{-\baselineskip}}
\def\erlaubeplatz{\relax\ifmmode\else\@\xspace\fi}
\def\entferneplatz{\relax\ifmmode\else\expandafter\@gobble\fi}
\def\send@toaux#1{\@bsphack\protected@write\@auxout{}{\string#1}\@esphack}
\def\rlabel#1[#2]#3#4#5{#5\rlabel@aux{#1}[#2]{#3}{#4}{#5}}
    \def\rlabel@aux#1[#2]#3#4#5{%
        \send@toaux{\newlabel{#1}{{\@currentlabel}{\thepage}{{\unexpanded{#5}}}{#2.\csname the#2\endcsname}{}}}\relax%
    }
\def\tag@rawscheme#1#2[#3]#4#5{\@ifnextchar[{\tag@rawscheme@{#1}{#2}[#3]{#4}{#5}}{\tag@rawscheme@{#1}{#2}[#3]{#4}{#5}[*]}}
    \def\tag@rawscheme@#1#2[#3]#4#5[#6]{\@ifnextchar\bgroup{\tag@rawscheme@@{#1}{#2}[#3]{#4}{#5}[#6]}{\tag@rawscheme@@{#1}{#2}[#3]{#4}{#5}[#6]{}}}
    \def\tag@rawscheme@@#1#2[#3]#4#5[#6]#7{%
        \ifthenelse{\equal{#6}{*}}{%
            \ifthenelse{\equal{#7}{\boolleer}}{\refstepcounter{#3}#4\csname the#3\endcsname#5}{#4#7#5}%
        }{%
            \refstepcounter{#3}#4%
            \ifthenelse{\equal{#7}{\boolleer}}{\rlabel{#6}[#3]{#1}{#2}{\csname the#3\endcsname}}{\rlabel{#6}[#3]{#1}{#2}{#7}}%
            #5%
        }%
    }
\def\tag@scheme#1#2[#3]{\tag@rawscheme{#1}{#2}[#3]{\upshape(}{\upshape)}}
\def\eqtag@post#1{\makebox[0pt][r]{#1}}
\def\eqtag@pre{\tag@scheme{Eq}{Equation}[Xe]}
\def\eqtag{\@ifnextchar[{\eqtag@}{\eqtag@[*]}}
    \def\eqtag@[#1]{\@ifnextchar\bgroup{\eqtag@@[#1]}{\eqtag@@[#1]{}}}
    \def\eqtag@@[#1]#2{\eqtag@post{\eqtag@pre[#1]{#2}}}
\def\eqcref#1{\text{(\ref{#1})}}
\def\punktlabel#1{\label{it:#1:\beweislabel}}
\def\punktcref#1{\eqcref{it:#1:\beweislabel}}
\def\opfromto[#1]_#2^#3{\underset{#2}{\overset{#3}{#1}}}
\def\textoverset#1#2{\overset{\text{#1}}{#2}}
\def\eqcrefoverset#1#2{\textoverset{\eqcref{#1}}{#2}}
\def\mathclap#1{#1}
\def\oberunterset#1{\@ifnextchar^{\oberunterset@oben{#1}}{\oberunterset@unten{#1}}}
    \def\oberunterset@oben#1^#2_#3{\underset{\mathclap{#3}}{\overset{\mathclap{#2}}{#1}}}
    \def\oberunterset@unten#1_#2^#3{\underset{\mathclap{#2}}{\overset{\mathclap{#3}}{#1}}}
    \def\breitunderbrace#1_#2{\underbrace{#1}_{\mathclap{#2}}}
    \def\breitoverbrace#1^#2{\overbrace{#1}^{\mathclap{#2}}}
    \def\breitunderbracket#1_#2{\underbracket{#1}_{\mathclap{#2}}}
    \def\breitoverbracket#1^#2{\overbracket{#1}^{\mathclap{#2}}}
\def\generatenestedsecnumbering#1#2#3{%
    \expandafter\gdef\csname thelong#3\endcsname{%
        \expandafter\csname the#2\endcsname%
        \secnumberingpt%
        \expandafter\csname #1\endcsname{#3}%
    }%
    \expandafter\gdef\csname theshort#3\endcsname{%
        \expandafter\csname #1\endcsname{#3}%
    }%
}
\def\generatenestedthmnumbering#1#2#3{%
    \expandafter\gdef\csname the#3\endcsname{%
        \expandafter\csname the#2\endcsname%
        \thmnumberingpt%
        \expandafter\csname #1\endcsname{#3}%
    }%
    \expandafter\gdef\csname theshort#3\endcsname{%
        \expandafter\csname #1\endcsname{#3}%
    }%
}
\def\+#1{\addtocounter{#1}{1}}
\def\setcounternach#1#2{\setcounter{#1}{#2}\addtocounter{#1}{-1}}
\def\forcepunkt#1{#1\IfEndWith{#1}{.}{}{.}}
\def\lateinabkuerzung#1#2{%
    \expandafter\gdef\csname #1\endcsname{\emph{#2}\@ifnextchar.{\entferneplatz}{\erlaubeplatz}}
}
\def\deutscheabkuerzung#1#2{%
    \expandafter\gdef\csname #1\endcsname{{#2}\@ifnextchar.{\entferneplatz}{\erlaubeplatz}}
}
\def\matrix#1{\left(\begin{array}{#1}}
    \def\endmatrix{\end{array}\right)}
\def\smatrix{\left(\begin{smallmatrix}}
    \def\endsmatrix{\end{smallmatrix}\right)}
\def\multiargrekursiverbefehl#1#2#3#4#5#6#7#8{%
    \expandafter\gdef\csname#1\endcsname #2##1#4{\csname #1@anfang\endcsname##1#3\egroup}
    \expandafter\def\csname #1@anfang\endcsname##1#3{#5##1\@ifnextchar\egroup{\csname #1@ende\endcsname}{#7\csname #1@mitte\endcsname}}
    \expandafter\def\csname #1@mitte\endcsname##1#3{#6##1\@ifnextchar\egroup{\csname #1@ende\endcsname}{#7\csname #1@mitte\endcsname}}
    \expandafter\def\csname #1@ende\endcsname##1{#8}
}
\def\faelle[#1]#2{\left\{\begin{array}[#1]{#2}}
    \def\endfaelle{\end{array}\right.}
\def\BeweisRichtung[#1]{\@ifnextchar\bgroup{\@BeweisRichtung@c[#1]}{\@BeweisRichtung@bes[#1]}}
    \def\@BeweisRichtung@bes[#1]{{\bfseries(#1).~}}
    \def\@BeweisRichtung@c[#1]#2#3{{\bfseries(#2 #1 #3).~}}
\def\erzeugeBeweisRichtungBefehle#1#2{
    \expandafter\gdef\csname #1text\endcsname##1##2{\BeweisRichtung[#2]{##1}{##2}}
    \expandafter\gdef\csname #1\endcsname{%
        \@ifnextchar\bgroup{\csname #1@\endcsname}{\csname #1text\endcsname{}{}}%
    }
    \expandafter\gdef\csname #1@\endcsname##1##2{%
        \csname #1text\endcsname{\punktcref{##1}}{\punktcref{##2}}%
    }
}
\def\cal#1{\mathcal{#1}}
\def\brkt#1{\langle{}#1{}\rangle}
\def\mathfrak#1{\mbox{\usefont{U}{euf}{m}{n}#1}}
\def\kurs#1{\textit{#1}}
\def\rectangleblack{\text{\RectangleBold}}
\def\squareblack{\blacksquare}
\def\crefname@full#1#2#3#4#5{%
    \crefname{#1}{#2}{#3}
    \Crefname{#1}{#4}{#5}
}
\def\crefname@fullmod#1#2#3#4#5{%
    \crefname@full{#1}{#2}{#3}{#4}{#5}
    \crefname@full{#1@basic}{#2}{#3}{#4}{#5}
    \crefname@full{#1@withName}{#2}{#3}{#4}{#5}
}
    \def\qedEIGEN#1{\@ifnextchar[{\qedEIGEN@c{#1}}{\qedEIGEN@bes{#1}}}
    \def\qedEIGEN@bes#1{%
        \parfillskip=0pt
        \widowpenalty=10000
        \displaywidowpenalty=10000
        \finalhyphendemerits=0
        \leavevmode
        \unskip
        \nobreak
        \hfil
        \penalty50
        \hskip.2em
        \null
        \hfill
        #1
        \par%
    }
    \def\qedEIGEN@c#1[#2]{%
        \parfillskip=0pt
        \widowpenalty=10000
        \displaywidowpenalty=10000
        \finalhyphendemerits=0
        \leavevmode
        \unskip
        \nobreak
        \hfil
        \penalty50
        \hskip.2em
        \null
        \hfill
        {#1~{\small\bfseries\upshape (#2)}}%
        \par%
    }
    \def\qedVARIANT#1#2{
        \expandafter\def\csname ennde#1Sign\endcsname{#2}
        \expandafter\def\csname ennde#1\endcsname{\@ifnextchar[{\qedEIGEN@c{#2}}{\qedEIGEN@bes{#2}}} 
    }
    \def\ra@pretheoremwork{
        \setlength{\theorempreskipamount}{\ownspaceabovethm}
    }
    \def\rathmtransfer#1#2{
        \expandafter\def\csname #2\endcsname{\csname #1\endcsname}
        \expandafter\def\csname end#2\endcsname{\csname end#1\endcsname}
    }
    \def\ranewthm#1#2#3[#4]{
        \theoremstyle{\current@theoremstyle}
        \theoremseparator{\current@theoremseparator}
        \theoremprework{\ra@pretheoremwork}
        \@ifundefined{#1@basic}{\newtheorem{#1@basic}[#4]{#2}}{\renewtheorem{#1@basic}[#4]{#2}}
        \theoremstyle{\current@theoremstyle}
        \theoremseparator{\thmForceSepPt}
        \theoremprework{\ra@pretheoremwork}
        \@ifundefined{#1@withName}{\newtheorem{#1@withName}[#4]{#2}}{\renewtheorem{#1@withName}[#4]{#2}}
        \theoremstyle{nonumberplain}
        \theoremseparator{\thmForceSepPt}
        \theoremprework{\ra@pretheoremwork}
        \@ifundefined{#1@star@basic}{\newtheorem{#1@star@basic}[#4]{#2}}{\renewtheorem{#1@star@basic}[#4]{#2}}
        \theoremstyle{nonumberplain}
        \theoremseparator{\thmForceSepPt}
        \theoremprework{\ra@pretheoremwork}
        \@ifundefined{#1@star@withName}{\newtheorem{#1@star@withName}[#4]{#2}}{\renewtheorem{#1@star@withName}[#4]{#2}}
        \umbauenenv{#1}{#3}[#4]
        \umbauenenv{#1@star}{#3}[#4]
        \rathmtransfer{#1@star}{#1*}
    }
    \def\umbauenenv#1#2[#3]{%
        \expandafter\def\csname #1\endcsname{\relax%
            \@ifnextchar[{\csname #1@\endcsname}{\csname #1@\endcsname[*]}%
        }
        \expandafter\def\csname #1@\endcsname[##1]{\relax%
            \@ifnextchar[{\csname #1@@\endcsname[##1]}{\csname #1@@\endcsname[##1][*]}%
        }
        \expandafter\def\csname #1@@\endcsname[##1][##2]{%
            \ifx*##1%
                \def\enndeOfBlock{\csname end#1@basic\endcsname}
                \csname #1@basic\endcsname%
            \else%
                \def\enndeOfBlock{\csname end#1@withName\endcsname}
                \csname #1@withName\endcsname[##1]%
            \fi%
            \def\makelabel####1{%
                \gdef\beweislabel{####1}%
                \label{\beweislabel}%
            }%
            \ifx*##2%
                \def\enndeSymbol{\qedEIGEN{#2}}
            \else%
                \def\enndeSymbol{\qedEIGEN{#2}[##2]}
            \fi
        }
        \expandafter\gdef\csname end#1\endcsname{\enndeSymbol\enndeOfBlock}
    }
        \def\current@theoremstyle{plain}
        \def\current@theoremseparator{\thmnumberingseppt}
        \theoremstyle{\current@theoremstyle}
    \def\behauptungbeleg@claim{%
        \iflanguage{british}{Claim}{%
        \iflanguage{english}{Claim}{%
        \iflanguage{ngerman}{Behauptung}{%
        \iflanguage{russian}{Утверждение}{%
        Claim%
        }}}}%
    }
    \def\behauptungbeleg@pf@kurz{%
        \iflanguage{british}{Pf}{%
        \iflanguage{english}{Pf}{%
        \iflanguage{ngerman}{Bew}{%
        \iflanguage{russian}{Доказательство}{%
        Pf%
        }}}}%
    }
    \def\behauptungbeleg{\@ifnextchar\bgroup{\behauptungbeleg@c}{\behauptungbeleg@bes}}
            \def\behauptungbeleg@c#1{\item[{\bfseries \behauptungbeleg@claim\erlaubeplatz #1.}]}
            \def\behauptungbeleg@bes{\item[{\bfseries \behauptungbeleg@claim.}]}
        \def\belegbehauptung{\item[{\bfseries\itshape\behauptungbeleg@pf@kurz.}]}
    \newcolumntype{\RECHTS}[1]{>{\raggedleft}p{#1}}
    \newcolumntype{\LINKS}[1]{>{\raggedright}p{#1}}
    \newcolumntype{m}{>{$}l<{$}}
    \newcolumntype{C}{>{$}c<{$}}
    \newcolumntype{L}{>{$}l<{$}}
    \newcolumntype{R}{>{$}r<{$}}
    \newcolumntype{0}{@{\hspace{0pt}}}
    \newcolumntype{\LINKSRAND}{@{\hspace{\@totalleftmargin}}}
    \newcolumntype{h}{@{\extracolsep{\fill}}}
    \newcolumntype{i}{>{\itshape}}
    \newcolumntype{t}{@{\hspace{\tabcolsep}}}
    \newcolumntype{q}{@{\hspace{1em}}}
    \newcolumntype{n}{@{\hspace{-\tabcolsep}}}
    \newcolumntype{M}[2]{%
        >{\begin{minipage}{#2}\begin{math}}%
        {#1}%
        <{\end{math}\end{minipage}}%
    }
    \newcolumntype{T}[2]{%
        >{\begin{minipage}{#2}}%
        {#1}%
        <{\end{minipage}}%
    }
    \def\punkteumgebung@genbefehl#1#2#3{
        \punkteumgebung@genbefehl@{#1}{#2}{#3}{}{}
        \punkteumgebung@genbefehl@{multi#1}{#2}{#3}{
            \setlength{\columnsep}{10pt}%
            \setlength{\columnseprule}{0pt}%
            \begin{multicols}{\thecolumnanzahl}%
        }{\end{multicols}\nvraum{1}}
    }
    \def\punkteumgebung@genbefehl@#1#2#3#4#5{
        \expandafter\gdef\csname #1\endcsname{
            \@ifnextchar\bgroup{\csname #1@c\endcsname}{\csname #1@bes\endcsname}
        }
            \expandafter\def\csname #1@c\endcsname##1{
                \@ifnextchar[{\csname #1@c@\endcsname{##1}}{\csname #1@c@\endcsname{##1}[\z@]}
            }
            \expandafter\def\csname #1@c@\endcsname##1[##2]{
                \@ifnextchar[{\csname #1@c@@\endcsname{##1}[##2]}{\csname #1@c@@\endcsname{##1}[##2][\z@]}
            }
            \expandafter\def\csname #1@c@@\endcsname##1[##2][##3]{
                \let\alterlinkerRand\gesamtlinkerRand
                \let\alterrechterRand\gesamtrechterRand
                \addtolength{\gesamtlinkerRand}{##2}
                \addtolength{\gesamtrechterRand}{##3}
                \advance\linewidth -##2%
                \advance\linewidth -##3%
                \advance\@totalleftmargin ##2%
                \parshape\@ne \@totalleftmargin\linewidth%
                #4
                \begin{#2}[\upshape ##1]%
                    \setlength{\parskip}{0.5\baselineskip}\relax%
                    \setlength{\topsep}{\z@}\relax%
                    \setlength{\partopsep}{\z@}\relax%
                    \setlength{\parsep}{\parskip}\relax%
                    \setlength{\itemsep}{#3}\relax%
                    \setlength{\listparindent}{\z@}\relax%
                    \setlength{\itemindent}{\z@}\relax%
            }
            \expandafter\def\csname #1@bes\endcsname{
                \@ifnextchar[{\csname #1@bes@\endcsname}{\csname #1@bes@\endcsname[\z@]}
            }
            \expandafter\def\csname #1@bes@\endcsname[##1]{
                \@ifnextchar[{\csname #1@bes@@\endcsname[##1]}{\csname #1@bes@@\endcsname[##1][\z@]}
            }
            \expandafter\def\csname #1@bes@@\endcsname[##1][##2]{
                \let\alterlinkerRand\gesamtlinkerRand
                \let\alterrechterRand\gesamtrechterRand
                \addtolength{\gesamtlinkerRand}{##1}
                \addtolength{\gesamtrechterRand}{##2}
                \advance\linewidth -##1%
                \advance\linewidth -##2%
                \advance\@totalleftmargin ##1%
                \parshape\@ne \@totalleftmargin\linewidth%
                #4
                \begin{#2}%
                    \setlength{\parskip}{0.5\baselineskip}\relax%
                    \setlength{\topsep}{\z@}\relax%
                    \setlength{\partopsep}{\z@}\relax%
                    \setlength{\parsep}{\parskip}\relax%
                    \setlength{\itemsep}{#3}\relax%
                    \setlength{\listparindent}{\z@}\relax%
                    \setlength{\itemindent}{\z@}\relax%
            }
        \expandafter\gdef\csname end#1\endcsname{%
            \end{#2}#5
            \setlength{\gesamtlinkerRand}{\alterlinkerRand}
            \setlength{\gesamtlinkerRand}{\alterrechterRand}
        }
    }
    \def\ritempunkt{{\Large \textbullet}} 
    \setdefaultitem{\ritempunkt}{\ritempunkt}{\ritempunkt}{\ritempunkt}
    \renewenvironment{thebibliography}[1]{%
        \begin{ALTthebibliography}{#1}
        \addcontentsline{toc}{part}{\bibname}
    }{%
        \end{ALTthebibliography}
    }
\def\displaysum_#1{\@ifnextchar^{\displaysum@both_{#1}}{\displaysum@@sub{#1}}}
    \def\displaysum@both_#1^#2{\displaysum@@subsup{#1}{#2}}
    \def\displaysum@@sub#1{\mathop{\displaystyle\csname sum\endcsname_{#1}}}
    \def\displaysum@@subsup#1#2{\mathop{\displaystyle\csname sum\endcsname_{#1}^{#2}}}
\def\displaysup_#1{\@ifnextchar^{\displaysup@both_{#1}}{\displaysup@@sub{#1}}}
    \def\displaysup@both_#1^#2{\displaysup@@subsup{#1}{#2}}
    \def\displaysup@@sub#1{\mathop{\displaystyle\csname sup\endcsname_{#1}}}
    \def\displaysup@@subsup#1#2{\mathop{\displaystyle\csname sup\endcsname_{#1}^{#2}}}
\def\displaymin_#1{\@ifnextchar^{\displaymin@both_{#1}}{\displaymin@@sub{#1}}}
    \def\displaymin@both_#1^#2{\displaymin@@subsup{#1}{#2}}
    \def\displaymin@@sub#1{\mathop{\displaystyle\csname min\endcsname_{#1}}}
    \def\displaymin@@subsup#1#2{\mathop{\displaystyle\csname min\endcsname_{#1}^{#2}}}
\def\displaymax_#1{\@ifnextchar^{\displaymax@both_{#1}}{\displaymax@@sub{#1}}}
    \def\displaymax@both_#1^#2{\displaymax@@subsup{#1}{#2}}
    \def\displaymax@@sub#1{\mathop{\displaystyle\csname max\endcsname_{#1}}}
    \def\displaymax@@subsup#1#2{\mathop{\displaystyle\csname max\endcsname_{#1}^{#2}}}
\def\displaylim_#1{\@ifnextchar^{\displaylim@both_{#1}}{\displaylim@@sub{#1}}}
    \def\displaylim@both_#1^#2{\displaylim@@subsup{#1}{#2}}
    \def\displaylim@@sub#1{\mathop{\displaystyle\csname lim\endcsname_{#1}}}
    \def\displaylim@@subsup#1#2{\mathop{\displaystyle\csname lim\endcsname_{#1}^{#2}}}
\def\displayliminf_#1{\@ifnextchar^{\displayliminf@both_{#1}}{\displayliminf@@sub{#1}}}
    \def\displayliminf@both_#1^#2{\displayliminf@@subsup{#1}{#2}}
    \def\displayliminf@@sub#1{\mathop{\displaystyle\csname liminf\endcsname_{#1}}}
    \def\displayliminf@@subsup#1#2{\mathop{\displaystyle\csname liminf\endcsname_{#1}^{#2}}}
\def\displaylimsup_#1{\@ifnextchar^{\displaylimsup@both_{#1}}{\displaylimsup@@sub{#1}}}
    \def\displaylimsup@both_#1^#2{\displaylimsup@@subsup{#1}{#2}}
    \def\displaylimsup@@sub#1{\mathop{\displaystyle\csname limsup\endcsname_{#1}}}
    \def\displaylimsup@@subsup#1#2{\mathop{\displaystyle\csname limsup\endcsname_{#1}^{#2}}}
    \def\matrix#1{\left(\begin{array}[mc]{#1}}
        \def\endmatrix{\end{array}\right)}
    \def\smatrix{\left(\begin{smallmatrix}}
        \def\endsmatrix{\end{smallmatrix}\right)}
    \def\multiargrekursiverbefehl#1#2#3#4#5#6#7#8{%
        \expandafter\gdef\csname#1\endcsname #2##1#4{\csname #1@anfang\endcsname##1#3\egroup}
        \expandafter\def\csname #1@anfang\endcsname##1#3{#5##1\@ifnextchar\egroup{\csname #1@ende\endcsname}{#7\csname #1@mitte\endcsname}}
        \expandafter\def\csname #1@mitte\endcsname##1#3{#6##1\@ifnextchar\egroup{\csname #1@ende\endcsname}{#7\csname #1@mitte\endcsname}}
        \expandafter\def\csname #1@ende\endcsname##1{#8}
    }
    \def\underbracenodisplay#1{%
        \mathop{\vtop{\m@th\ialign{##\crcr
        $\hfil\displaystyle{#1}\hfil$\crcr
        \noalign{\kern3\p@\nointerlineskip}%
        \upbracefill\crcr\noalign{\kern3\p@}}}}\limits%
    }
    \def\mathe[#1]#2{%
        \ifthenelse{\equal{\boolinmdframed}{\boolwahr}}{}{\begin{escapeeinzug}}
        \noindent%
        \let\eqtagset\boolfalsch
        \let\eqtaglabel\boolleer
        \let\eqtagsymb\boolleer
        \let\alteqtag\eqtag
        \def\eqtag{\@ifnextchar[{\eqtag@loc@}{\eqtag@loc@[*]}}%
        \def\eqtag@loc@[##1]{\@ifnextchar\bgroup{\eqtag@loc@@[##1]}{\eqtag@loc@@[##1]{}}}%
        \def\eqtag@loc@@[##1]##2{%
            \gdef\eqtagset{\boolwahr}
            \gdef\eqtaglabel{##1}
            \gdef\eqtagsymb{##2}
        }%
        \def\verticalalign{}%
            \IfBeginWith{#1}{t}{\def\verticalalign{t}}{}%
            \IfBeginWith{#1}{m}{\def\verticalalign{c}}{}%
            \IfBeginWith{#1}{b}{\def\verticalalign{b}}{}%
        \def\horizontalalign{\null\hfill\null}%
            \IfEndWith{#1}{l}{}{\null\hfill\null}%
            \IfEndWith{#1}{r}{\def\horizontalalign{}}{}%
        \begin{math}
        \begin{array}[\verticalalign]{0#2}%
    }
        \def\endmathe{%
            \end{array}
            \end{math}\horizontalalign%
            \let\eqtag\alteqtag
            \ifthenelse{\equal{\eqtagset}{\boolwahr}}{\eqtag[\eqtaglabel]{\eqtagsymb}}{}
            \ifthenelse{\equal{\boolinmdframed}{\boolwahr}}{}{\end{escapeeinzug}}%
        }
    \def\longmathe[#1]#2{\relax
        \let\altarraystretch\arraystretch
        \renewcommand\arraystretch{1.2}\relax
        \begin{longtable}[#1]{\LINKSRAND #2}
    }
        \def\endlongmathe{
            \end{longtable}
            \renewcommand\arraystretch{\altarraystretch}
        }
    \def\einzug{\@ifnextchar[{\indents@}{\indents@[\z@]}}
        \def\indents@[#1]{\@ifnextchar[{\indents@@[#1]}{\indents@@[#1][\z@]}}
        \def\indents@@[#1][#2]{%
            \begin{list}{}{\relax
                \setlength{\topsep}{\z@}\relax
                \setlength{\partopsep}{\z@}\relax
                \setlength{\parsep}{\parskip}\relax
                \setlength{\listparindent}{\z@}\relax
                \setlength{\itemindent}{\z@}\relax
                \setlength{\leftmargin}{#1}\relax
                \setlength{\rightmargin}{#2}\relax
                \let\alterlinkerRand\gesamtlinkerRand
                \let\alterrechterRand\gesamtrechterRand
                \addtolength{\gesamtlinkerRand}{#1}
                \addtolength{\gesamtrechterRand}{#2}
            }\relax
                \item[]\relax
        }
            \def\endeinzug{%
                \setlength{\gesamtlinkerRand}{\alterlinkerRand}
                \setlength{\gesamtlinkerRand}{\alterrechterRand}
                \end{list}%
            }
    \def\escapeeinzug{\begin{einzug}[-\gesamtlinkerRand][-\gesamtrechterRand]}
        \def\endescapeeinzug{\end{einzug}}
    \def\programmiercode{
        \modulolinenumbers[1]
        \begin{einzug}[\rtab][\rtab]%
        \begin{linenumbers}%
            \fontfamily{cmtt}\fontseries{m}\fontshape{u}\selectfont%
            \setlength{\parskip}{1\baselineskip}%
            \setlength{\parindent}{0pt}%
    }
        \def\endprogrammiercode{
            \end{linenumbers}
            \end{einzug}
        }
    \def\schattiertebox@genbefehl#1#2#3{
        \expandafter\gdef\csname #1\endcsname{%
            \@ifnextchar[{\csname #1@args\endcsname}{\csname #1@args\endcsname[#3]}
        }
            \expandafter\def\csname #1@args\endcsname[##1]{%
                \@ifnextchar[{\csname #1@args@l\endcsname[##1]}{\csname #1@args@n\endcsname[##1]}
            }
            \expandafter\def\csname #1@args@l\endcsname[##1][##2]{%
                \@ifnextchar[{\csname #1@args@l@r\endcsname[##1][##2]}{\csname #1@args@l@n\endcsname[##1][##2]}
            }
            \expandafter\def\csname #1@args@n\endcsname[##1]{%
                \let\boolinmdframed\boolwahr
                \begin{mdframed}[#2leftmargin=0,rightmargin=0,outermargin=0,innermargin=0,##1]
            }
            \expandafter\def\csname #1@args@l@n\endcsname[##1][##2]{%
                \let\boolinmdframed\boolwahr
                \begin{mdframed}[#2leftmargin=##2/2,rightmargin=##2/2,outermargin=##2/2,innermargin=##2/2,##1]
            }
            \expandafter\def\csname #1@args@l@r\endcsname[##1][##2][##3]{%
                \let\boolinmdframed\boolwahr
                \begin{mdframed}[#2leftmargin=##2,rightmargin=##3,outermargin=##2,innermargin=##3,##1]
            }
        \expandafter\gdef\csname end#1\endcsname{%
            \end{mdframed}
            \let\boolinmdframed\boolfalsch
        }
    }
    \def\tikzsetzepfeil#1{%
        \begin{tikzpicture}[remember picture,overlay,>=latex]%
            \draw #1;%
        \end{tikzpicture}%
    }
    \def\tikzsetzekreise[#1]#2#3{%
        \tikzsetzepfeil{%
        [rounded corners,#1]%
            ([shift={(-\tabcolsep,0.75\baselineskip)}]#2)%
            rectangle%
            ([shift={(\tabcolsep,-0.5\baselineskip)}]#3)
        }%
    }
    \tikzset{
        >=stealth,
        auto,
        node distance=1cm,
        thick,
        main node/.style={
            circle,draw,font=\sffamily\Large\bfseries,minimum size=0pt
        },
        state/.style={minimum size=0pt}
        loop above right/.style={loop,out=30,in=60,distance=0.5cm},
        loop above left/.style={above left,out=150,in=120,loop},
        loop below right/.style={below right,out=330,in=300,loop},
        loop below left/.style={below left,out=240,in=210,loop},
        itria/.style={
            draw,dashed,shape border uses incircle,
            isosceles triangle,shape border rotate=90,yshift=-1.45cm
        },
        rtria/.style={
            draw,dashed,shape border uses incircle,
            isosceles triangle,isosceles triangle apex angle=90,
            shape border rotate=-45,yshift=0.2cm,xshift=0.5cm
        },
        ritria/.style={
            draw,dashed,shape border uses incircle,
            isosceles triangle,isosceles triangle apex angle=110,
            shape border rotate=-55,yshift=0.1cm
        },
        litria/.style={
            draw,dashed,shape border uses incircle,
            isosceles triangle,isosceles triangle apex angle=110,
            shape border rotate=235,yshift=0.1cm
        }
    }
\def\topInterior#1{\mathop{\textup{int}}(#1)}
\def\oBall#1_#2{\cal{B}_{#2}(#1)}
\def\clBall#1_#2{\quer{\cal{B}}_{#2}(#1)}
\def\Cts{\@ifnextchar_{\Cts@tief}{\Cts@tief_{}}}
    \def\Cts@tief_#1#2{\@ifnextchar\bgroup{\Cts@two_{#1}{#2}}{\Cts@one_{#1}{#2}}}
    \def\Cts@one_#1#2{C_{#1}\big(#2\big)}
    \def\Cts@two_#1#2#3{C_{#1}\big(#2,~#3\big)}
\def\KmpRm#1{\cal{K}(#1)}
\def\Lspace^#1{L^{#1}}
\def\reals{\mathbb{R}}
\def\RealPart{\mathop{\mathfrak{R}\text{\upshape e}}}
\def\Torus{\mathbb{T}}
\def\rtnl{\mathbb{Q}}
\def\onematrix{\text{\upshape\bfseries I}}
\def\zerovector{\text{\upshape\bfseries 0}}
\def\ntrl{\mathbb{N}}
\def\ntrlpos{\mathbb{N}}
\def\ntrlzero{\mathbb{N}_{0}}
\def\realsNonNeg{\reals_{+}}
\def\ntrlpos{\mathbb{N}}
\def\leer{\emptyset}
\def\BRAKET#1#2{\langle{}#1,~#2{}\rangle}
\def\brkt#1{\langle{}#1{}\rangle}
\def\C0{\ensuremath{C_{0}}}
\def\restr#1{\vert_{#1}}
\def\eps{\varepsilon}
\let\altphi\phi
\let\altvarphi\varphi
    \def\phi{\altvarphi}
    \def\varphi{\altphi}
\def\quer#1{\overline{#1}}
\def\lim{\mathop{\ell\mathrm{im}}}
\def\ran{\mathop{\textit{ran}}}
\def\EndBanach{\mathop{\mathfrak{L}}}
\def\BoundedOps#1{\@ifnextchar\bgroup{\BoundedOps@two{#1}}{\EndBanach(#1)}}
    \def\BoundedOps@two#1#2{\EndBanach(#1,#2)}
\def\HilbertRaum{\mathcal{H}}
\def\RaumX{X}
\def\RaumY{Y}
\def\hardSigma02{\cal{Q}}
    \def\OpSpaceU#1{\mathop{\cal{U}}(#1)}
    \def\OpSpaceI#1{\mathop{\cal{I}}(#1)}
    \def\OpSpaceC#1{\mathop{\cal{C}}(#1)}
\def\SpCs{\cal{F}^{c}_{s}}
\def\SpCw{\cal{F}^{c}_{w}}
\def\SpUs{\cal{F}^{u}_{s}}
\def\SpUw{\cal{F}^{u}_{w}}
\def\SpHs{\cal{C}_{s}}
\def\SpHw{\cal{C}_{w}}
\def\SpUHs{\cal{U}_{s}}
\def\SpUHw{\cal{U}_{w}}
    \def\topWOT{\text{\upshape \scshape wot}}
    \def\tinytopWOT{\text{\scriptsize\upshape \scshape wot}}
    \def\toplocWOT{\text{{{$\mathpzc{k}$}}}_{\text{\tiny\upshape \scshape wot}}}
    \def\tinytoplocWOT{\text{\scriptsize{{{$\mathpzc{k}$}}}-{\upshape \scshape wot}}}
    \def\topSOT{\text{\upshape \scshape sot}}
    \def\tinytopSOT{\text{\scriptsize\upshape \scshape sot}}
    \def\toplocSOT{\text{{{$\mathpzc{k}$}}}_{\text{\tiny\upshape \scshape sot}}}
    \def\tinytoplocSOT{\text{\scriptsize{{{$\mathpzc{k}$}}}-{\upshape \scshape sot}}}
    \def\topPW{\text{\upshape \scshape pw}}
    \def\tinytopPW{\text{\scriptsize\upshape \scshape pw}}
\def\topLOC{\text{{{$\mathpzc{k}$}}}}
\def\tinytopLOC{\text{\scriptsize{{$\mathpzc{k}$}}}}
\def\minizerlegt{\rotatebox[origin=c]{180}{\ensuremath{\neg}}}
\def\streamaddbasic{\overset{\minizerlegt}{\oplus}} 
\def\streamtimesbasic{\overset{\minizerlegt}{\otimes}}
\def\streamadd_#1^#2{\mathbin{{}_{#1}\streamaddbasic_{#2}}}
\def\streamtimes_#1{\mathbin{\underset{#1}{\streamtimesbasic}}}
\renewcommand{\arraystretch}{1}
\def\firstparagraph{\noindent}
\def\continueparagraph{\noindent}
    \def\theunitnamesection{\thesection}
    \def\sectionname{}
    \let\appendix@orig\appendix
    \def\appendix{%
        \appendix@orig%
        \let\boolinappendix\boolwahr
        \addcontentsline{toc}{part}{\appendixname}%
        \addtocontents{toc}{\protect\setcounter{tocdepth}{0}}
        \def\sectionname{Appendix}%
        \def\theunitnamesection{\Alph{section}}%
    }
    \def\notappendix{%
        \let\boolinappendix\boolfalse
        \addtocontents{toc}{\protect\setcounter{tocdepth}{1 }}
        \def\sectionname{}%
        \def\theunitnamesection{\arabic{section}}%
    }
    \def\@seccntformat#1{%
        \protect\textup{%
            \protect\@secnumfont
            \expandafter\protect\csname format#1\endcsname%
            \csname the#1\endcsname
            \expandafter\protect\csname format#1@pt\endcsname%
            \space
        }%
    }
    \def\formatsection@text{\centering\Large\scshape}
    \def\formatsection@pt{\secnumberingseppt}
    \def\section{\@startsection{section}{1}{\z@}{.7\linespacing\@plus\linespacing}{.5\linespacing}{\formatsection@text}}
    \def\formatsubsection@text{\flushleft\bfseries\scshape}
    \def\formatsubsection@pt{\subsecnumberingseppt}
    \def\subsection{\@startsection{subsection}{2}{\z@}{\z@}{\z@\hspace{1em}}{\formatsubsection@text}}
\def\rafootnotectr{20}
\def\incrftnotectr#1{%
    \addtocounter{#1}{1}%
    \ifnum\value{#1}>\rafootnotectr\relax
        \setcounter{#1}{0}%
    \fi%
}
\def\footnoteref[#1]{\protected@xdef\@thefnmark{\ref{#1}}\@footnotemark}
\let\altfootnotetext\footnotetext
    \def\footnotetext[#1]#2{\incrftnotectr{footnote}\altfootnotetext[\value{footnote}]{\label{#1}#2}}
    \def\footnotemark[#1]{\text{\textsuperscript{\getrefnumber{#1}}}}
\def\kopfzeiledefault{
    \lhead[]{}
    \lhead[]{}
    \chead[]{}
    \rhead[]{}
    \lfoot[]{}
    \cfoot{\footnotesize\thepage}
    \rfoot[]{}
}
\def\aktuellesfont{\csnamermfamily\endcsname}
\def\documentfont{%
    \gdef\aktuellesfont{\csnamermfamily\endcsname}%
    \fontfamily{cmr}\fontseries{m}\selectfont%
    \renewcommand{\sfdefault}{phv}%
    \renewcommand{\ttdefault}{pcr}%
    \renewcommand{\rmdefault}{cmr}
    \renewcommand{\bfdefault}{bx}%
    \renewcommand{\itdefault}{it}%
    \renewcommand{\sldefault}{sl}%
    \renewcommand{\scdefault}{sc}%
    \renewcommand{\updefault}{n}%
}
\def\startdocumentlayoutoptions{
    \selectlanguage{british}
    \setlength{\parskip}{0.25\baselineskip}
    \setlength{\parindent}{2em}
    \kopfzeiledefault
    \documentfont
    \normalsize
}
\def\highlightTerm#1{\emph{#1}}
\newcommand{\highlightForReview}[1]{%
    \bgroup\color{blue}#1\egroup%
}
\def\@adminfootnotes{%
    \let\@makefnmark\relax
    \let\@thefnmark\relax
    \ifx\@empty\@date\else%
        \@footnotetext{\@setdate}%
    \fi%
    \ifx\@empty\@subjclass\else%
        \@footnotetext{\@setsubjclass}%
    \fi
    \ifx\@empty\@keywords\else%
        \@footnotetext{\@setkeywords}%
    \fi
    \ifx\@empty\thankses\else%
        \@footnotetext{\def\par{\let\par\@par}\@setthanks}%
    \fi
}
\def\@settitle{\Large\bfseries\scshape\@title}
\def\@maketitle{%
  \normalfont\normalsize
  \@adminfootnotes
  \@mkboth{\@nx\shortauthors}{\@nx\shorttitle}%
  \global\topskip42\p@\relax
  {\centering\@settitle}
  \ifx\@empty\authors\else{\centering\small\@setauthors}\fi
  \ifx\@empty\@date\else{\vtop{\centering\small\@date\@@par}}\fi
  \ifx\@empty\@dedicatory%
  \else%
    \baselineskip\p@
    \vtop{\centering{\footnotesize\itshape\@dedicatory\@@par}%
    \global\dimen@i\prevdepth}\prevdepth\dimen@i%
  \fi
  \@setabstract
  \normalsize
  \if@titlepage
    \newpage
  \else
    \dimen@34\p@\advance\dimen@-\baselineskip
  \fi
}
\def\addresseshere{%
  \bgroup
  \setlength{\parindent}{0pt}
  \enddoc@text
  \egroup
  \let\enddoc@text\relax
}
\begin{document}
\startdocumentlayoutoptions

\thispagestyle{plain}

\def\abstractname{Abstract}
\begin{abstract}
    Working over infinite dimensional separable Hilbert spaces, residual results have been achieved for
    the space of contractive $\C0$-semigroups
    under the topology of uniform weak operator convergence on compact subsets of $\realsNonNeg$.
    Eisner and Ser\'eny raised in
        \cite{eisnersereny2009catThmStableSemigroups}
        and
        \cite{eisner2010buchStableOpAndSemigroups}
    the open problem:
    Does this space constitute a Baire space?
    Observing that the subspace of unitary semigroups is completely metrisable
    and appealing to known density results,
    we solve this problem positively
    by showing that
    certain topological properties can in general be transferred from dense subspaces to larger spaces.
    The transfer result in turn relies upon classification of topological properties via infinite games.
    Our approach is sufficiently general and can be applied to other contexts,
    \exempli the space of contractions under the $\topPW$-topology.
\end{abstract}

\subjclass[2020]{47D06, 91A44}
\keywords{Semigroups of operators, residuality, Baire spaces, Choquet spaces, dilation.}
\title[The space of contractive $\C0$-semigroups is a Baire space]{The space of contractive $\C0$-semigroups is a Baire space}
\author{Raj Dahya}
\email{raj.dahya@web.de}
\address{Fakult\"at f\"ur Mathematik und Informatik\newline
Universit\"at Leipzig, Augustusplatz 10, D-04109 Leipzig, Germany}

\maketitle

\setcounternach{section}{1}

\section[Introduction]{Introduction}
\label{sec:intro}

\firstparagraph
Various residuality results for the space of unitary/isometric/contractive $\C0$-semigroups
over a separable, infinite dimensional Hilbert space,
and viewed with natural topologies,
have been achieved in
    \cite{eisnersereny2009catThmStableSemigroups,krol2009}
    and
    \cite[\S{}III.6 and \S{}IV.3.3]{eisner2010buchStableOpAndSemigroups}.
In a similar vein, residual properties for spaces of operators
(\viz unitaries, isometries, and contractions)
over separable, infinite dimensional Hilbert spaces
and viewed with the weak, strong, and weak-polynomial operator topologies
were investigated in
    \cite{eisner2010typicalContraction,eisnermaitrai2010typicalOperators,Eisner2008categoryThmStableOpHilbert}.


Now, residuality is meaningful provided the larger topological space is at least a Baire space.
The space of contractions under the $\topPW$-topology was proved in
\cite[Theorem~4.1]{eisnermaitrai2010typicalOperators}
to constitute a Polish space and thus a Baire space.
We can actually view the space of contractions as \emph{discrete} contractive semigroups
as shall be made precise later (see \Cref{rem:discete-time-is-pw:sig:article-simpl-wk-problem-raj-dahya}).
The \emph{continuous} case however, \idest the space of $\C0$-semigroups
under the topology of uniform weak operator convergence on compact subsets of $\realsNonNeg$,
remained open.
The history of this is rooted
in the following problem about asymptotic properties of $\C0$-semigroups.
Let $\HilbertRaum$ denote a separable infinite dimensional Hilbert space.

\begin{defn*}
    A $\C0$-semigroup, $T$, on $\HilbertRaum$ is called \highlightTerm{weakly stable}
    if for all $\xi,\eta\in\HilbertRaum$ it holds that
    ${\brkt{T(t)\xi,\eta}\longrightarrow 0}$ as ${t\longrightarrow\infty}$.
    It is called \highlightTerm{almost weakly stable}
    if for all $\xi,\eta\in\HilbertRaum$
    some measurable set, $A\subseteq\realsNonNeg$
    with asymptotic density $1$\footnote{
        \idest{} ${\frac{\lambda(A\cap[0,t])}{\lambda([0,t])}\longrightarrow 1}$
        as ${t\longrightarrow\infty}$,
        where $\lambda$ denotes the Lebesgue measure.
    } exists, such that
    ${\brkt{T(t)\xi,\eta}\longrightarrow 0}$
    as ${t\longrightarrow\infty}$ in $A$.
\end{defn*}

In
    \cite[Theorem~2.5]{eisnersereny2009catThmStableSemigroups}
the following residuality result was proved:

\begin{thm*}[Eisner-Ser\'eny, 2009]
    Consider the spaces of unitary \respectively isometric $\C0$-semigroups on $\HilbertRaum$,
    under the topology of uniform strong operator convergence on compact subsets of $\realsNonNeg$.
    The properties of being
        \highlightTerm{almost weakly stable}
        and
        not \highlightTerm{weakly stable}
    are residual.
\end{thm*}

As we shall show later, the topologies of uniform strong and weak operator convergence
on compact subsets of $\realsNonNeg$ coincide for unitary $\C0$-semigroups
(see \Cref{prop:spHs-and-spUHs-polish:sig:article-simpl-wk-problem-raj-dahya}),
the latter topology being the one of particular interest in this paper.
In
    \cite[\S{}4]{eisnersereny2009catThmStableSemigroups}
the authors remark:
{\itshape It is not clear how to prove an analogue to [the residuality theorems for stability properties] for contractive semigroups.}
It is further remarked, that it be not clear whether the Baire category theorem applies to the space of contractive semigroups
(\cf also \cite[\S{}III.6.3]{eisner2010buchStableOpAndSemigroups}).
In \cite[Corollary~3.2]{krol2009},
Król addressed the first part of this problem.
He showed that the $G_{\delta}$-subspace of unitary $\C0$-semigroups is dense in the space of contractive $\C0$-semigroups,
thereby obtaining that generic contractive $\C0$-semigroups are unitary.
In particular, the above result immediately extends to the contractive case.

Our paper addresses the remaining issue, which completes this history.
Specifically, we positively solve the open problem as to whether the space of contractive $\C0$-semigroups constitutes a Baire space.
Thus we can indeed apply the Baire category theorem to this space and
the above residuality results about asymptotic properties are meaningful for the contractive case.



Now, the knowledge that this space is a Baire space
yields an analogous observation to \cite[Theorem~2.2]{eisner2010typicalContraction} about the single operator case,
\viz that unitary $\C0$-semigroups are residual in the (Baire) space of contractive $\C0$-semigroups
(see \Cref{cor:unitary-C0-residual-in-contractive-C0:sig:article-simpl-wk-problem-raj-dahya} below).
And building on this provides an immediate result in the study of rigidity phenomena
(which we prove in \S{}\ref{sec:applications}):

\begin{thm}
\makelabel{thm:application-rigidity:sig:article-simpl-wk-problem-raj-dahya}
    Let $\HilbertRaum$ be a separable infinite dimensional space.
    Then the set of all contractive $\C0$-semigroups, $T$, on $\HilbertRaum$,
    satisfying the following properties

    \begin{kompaktenum}{\bfseries (i)}[\rtab]
        \item\punktlabel{1}
            There exists a set $A\subseteq\realsNonNeg$ of density $1$,%
            \footnote{
                \idest{} ${\frac{\lambda(A\cap[0,t])}{\lambda([0,t])}\longrightarrow 1}$
                as ${t\longrightarrow\infty}$,
                where $\lambda$ denotes the Lebesgue measure.
            }
            such that ${(T(t))_{t\in A}\overset{\tinytopWOT}{\longrightarrow}0}$.
        \item\punktlabel{2}
            For each ${\alpha\in\Torus}$ (the unit circle in the complex plane)
            a subnet of ${(T(t))_{t\in\realsNonNeg}}$
            exists that converges asymptotically to $\alpha\onematrix$ under the $\topSOT$-topology.
    \end{kompaktenum}

    \continueparagraph
    is residual in the space of contractive $\C0$-semigroups,
    endowed with the topology of uniform weak operator convergence
    on compact subsets of $\realsNonNeg$.
\end{thm}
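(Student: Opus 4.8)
The plan is to reduce the statement to a residuality assertion on the subspace of unitary semigroups, to settle each of the two properties there, and then to transfer the result upward. Let $X$ denote the space of contractive $\C0$-semigroups under uniform weak operator convergence on compact subsets of $\realsNonNeg$; by the main result of this paper $X$ is a Baire space, and by \Cref{cor:unitary-C0-residual-in-contractive-C0:sig:article-simpl-wk-problem-raj-dahya} the subspace $Y$ of unitary semigroups is residual in $X$. By \Cref{prop:spHs-and-spUHs-polish:sig:article-simpl-wk-problem-raj-dahya} the topology $Y$ inherits coincides with uniform \emph{strong} convergence on compacta (the $\topuSOT$-topology), under which $Y$ is Polish and hence Baire. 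The transfer rests on the elementary fact that if $Y\subseteq X$ is comeager and $R\subseteq Y$ is comeager in the subspace $Y$, then $R$ is comeager in $X$: writing the witnessing dense-open subsets of $Y$ as traces $O_m\cap Y$ with $O_m$ open in $X$, density of $O_m\cap Y$ in the dense set $Y$ forces each $O_m$ to be dense in $X$, and intersecting the $O_m$ with a dense $G_\delta$ contained in $Y$ exhibits $R$ as comeager in $X$. Thus it suffices to prove that each listed property holds on a residual subset of $Y$.

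For property (i), observe that on $Y$ the existence of a single density-$1$ set $A$ with $(T(t))_{t\in A}\overset{\tinytopWOT}{\longrightarrow}0$ is, by separability of $\HilbertRaum$ together with a Koopman--von Neumann diagonalisation over a countable dense family of pairs, equivalent to the per-pair statement defining \emph{almost weak stability}. The residuality of almost weak stability for unitary $\C0$-semigroups is precisely the Eisner--Ser\'eny theorem \cite[Theorem~2.5]{eisnersereny2009catThmStableSemigroups} recalled above (and here the uniform strong and weak topologies agree), so the set of $T\in Y$ satisfying (i) is residual in $Y$.

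Property (ii) is the heart of the matter. Since the $\topuSOT$-topology on the contractions over the separable space $\HilbertRaum$ is metrisable, having a subnet of $(T(t))_{t\in\realsNonNeg}$ converge asymptotically to $\alpha\onematrix$ in the strong topology is equivalent to $\alpha\onematrix$ lying in the strong closure of $\{T(t):t\ge s\}$ for every $s\in\ntrl$; as that closure is closed in a metrisable space, it suffices to secure this for $\alpha$ ranging over a fixed countable dense $D\subseteq\Torus$. For $\alpha\in D$, $s\in\ntrl$, a finite family $F$ drawn from a countable dense subset of $\HilbertRaum$, and rational $\eps>0$, the set
\[
    U_{\alpha,s,F,\eps}=\bigl\{T\in Y:\ \exists\,t\ge s\ \forall\,\xi\in F,\ \norm{T(t)\xi-\alpha\xi}<\eps\bigr\}
\]
is open in $Y$, being a union over $t$ of the finite intersections of the open conditions $\norm{T(t)\xi-\alpha\xi}<\eps$ (continuity of $T\mapsto T(t)\xi$ in the $\topuSOT$-topology). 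Property (ii) then cuts out exactly the $G_\delta$ set $\bigcap_{\alpha,s,F,\eps}U_{\alpha,s,F,\eps}$, and it remains to prove each $U_{\alpha,s,F,\eps}$ dense. This is the main obstacle. I would argue it by a controlled perturbation of the generator: first approximate a given $T_0\in Y$ in the $\topuSOT$-topology on the relevant window $[0,N]$ by a unitary semigroup with pure point spectrum having only finitely many frequencies $\lambda_1,\dots,\lambda_d$ relevant to the vectors at hand (such almost periodic semigroups are dense, via spectral discretisation of the skew-adjoint generator $\imagunit A$), and then perturb $\lambda_1,\dots,\lambda_d$ by an arbitrarily small amount so that the only integer relations $\sum_j n_j\lambda_j=0$ are those with $\sum_j n_j=0$. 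By Kronecker's theorem the closure of the one-parameter orbit $\{(t\lambda_1,\dots,t\lambda_d)\bmod 2\pi:\ t\ge s\}$ then contains the full diagonal, so some arbitrarily large $t$ drives the semigroup to within $\eps$ of $\alpha\onematrix$ on $F$, while the smallness of the frequency perturbation keeps $T$ inside the prescribed neighbourhood. Hence each $U_{\alpha,s,F,\eps}$ is dense, and (ii) holds on a residual subset of $Y$.

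Finally, the sets of $T\in Y$ satisfying (i) and (ii) are each residual in the Baire space $Y$, so their intersection is residual in $Y$; there is no tension between the two, since (i) requires the generator to have no point spectrum whereas the recurrence in (ii) occurs along a density-zero net complementary to the density-$1$ set of (i), which continuous-spectrum rigidity measures realise. Applying the transfer observation of the first paragraph with this $Y$ yields that the intersection, \idest the set described in the theorem, is residual in the full space $X$ of contractive $\C0$-semigroups, as claimed.
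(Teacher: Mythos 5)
Your proof is correct, but it follows a genuinely different route from the paper's. The paper's proof is two lines: the unitary-case residuality of properties (i) and (ii) under the $\toplocSOT$-topology is cited wholesale from \cite[Theorem~IV.3.20]{eisner2010buchStableOpAndSemigroups}, and then transferred to the contractive case via \Cref{prop:residual-unitary-iff-contractive:sig:article-simpl-wk-problem-raj-dahya}. Your upward-transfer step (if $Y$ is comeagre in $X$ and $R$ is comeagre in the subspace $Y$, then $R$ is comeagre in $X$) is precisely the content of that proposition's proof, resting as it does on \Cref{cor:unitary-C0-residual-in-contractive-C0:sig:article-simpl-wk-problem-raj-dahya} and \Cref{prop:spHs-and-spUHs-polish:sig:article-simpl-wk-problem-raj-dahya}, so there you rederive what you could simply have cited. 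Where you genuinely diverge is in the unitary case itself, which you reprove rather than quote: for (i) you invoke Eisner--Ser\'eny \cite[Theorem~2.5]{eisnersereny2009catThmStableSemigroups} together with the known Koopman--von Neumann-type equivalence between the per-pair formulation of almost weak stability and the single density-one-set formulation appearing in the theorem; for (ii) you give a direct Baire-category argument, writing the property as the $G_{\delta}$ set $\bigcap U_{\alpha,s,F,\eps}$ (the reduction to a countable dense set of $\alpha\in\Torus$ via closedness of the set of strong cluster points is sound, as is the openness of each $U_{\alpha,s,F,\eps}$) and proving density by spectral discretisation of the generator, a small perturbation of the finitely many relevant frequencies, and Kronecker's theorem. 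That density sketch does complete: discretisation changes the semigroup by at most $t\delta$ in norm on the compact window, finitely many spectral projections capture the mass of $F$ up to $\eps$, and perturbing only those finitely many eigenvalues keeps the semigroup inside the prescribed basic neighbourhood while making the frequencies rationally independent, so the closure of the forward orbit in the $d$-torus is the whole torus and in particular contains the diagonal $(\alpha,\dots,\alpha)$. In effect you reprove the result the paper outsources to \cite{eisner2010buchStableOpAndSemigroups}; this costs length and leaves two steps as sketches (the equivalence used for (i) and the density argument for (ii)), but it makes the theorem self-contained and exhibits the underlying mechanism, whereas the paper buys brevity by leaning on the literature. One small remark: your closing comment about there being ``no tension'' between (i) and (ii) is superfluous, since finite intersections of residual sets are residual in any Baire space.
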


This solves the open problem raised in
    \cite[Remark~IV.3.23]{eisner2010buchStableOpAndSemigroups}.
In this book, the above was proved in the discrete setting
    (see \cite[Theorem~IV.3.11]{eisner2010buchStableOpAndSemigroups})
as well for the unitary and isometric case in the continuous setting
    (see \cite[Theorem~IV.3.20]{eisner2010buchStableOpAndSemigroups}).


Now, our approach to showing the Baire space property is a uniform one.
It additionally provides an alternative proof
that the space of contractions under the $\topPW$-topology is a Baire space
(\cf \cite[Theorem~4.1]{eisnermaitrai2010typicalOperators}).

Finally note that residuality properties of (contractive) operators on Banach spaces,
as initiated in \cite{eisner2010typicalContraction,eisnermaitrai2010typicalOperators}, have further recently been studied in connection with
\emph{hypercyclicity} and
the \emph{Invariant Subspace Problem}
in \cite{grivaux2017linear,grivaux2020does}. The continuous case has not yet been investigated.

\subsection[Spaces of operators and operator-valued functions]{Spaces of operators and operator-valued functions}
\label{sec:intro:spaces}

\def\compactcover{\tilde{\cal{K}}}
\def\spaceCoher{\mathop{\textup{coher}}}

\firstparagraph
Before formulating our main result, we need to define the key topological spaces.
Throughout, fix a separable, infinite dimensional Hilbert space, $\HilbertRaum$.
We denote via

    \begin{mathe}[mc]{rcccccl}
        \BoundedOps{\HilbertRaum}
        &\supseteq
            &\OpSpaceC{\HilbertRaum}
        &\supseteq
            &\OpSpaceI{\HilbertRaum}
        &\supseteq
            &\OpSpaceU{\HilbertRaum}\\
    \end{mathe}

\continueparagraph
(from left to right) the spaces of bounded linear operators, contractions, isometries and unitaries over $\HilbertRaum$.
These can be endowed with the weak operator topology ($\topWOT$),
the strong operator topology ($\topSOT$),
and also the \highlightTerm{weak polynomial topology} ($\topPW$),
which is defined via the convergence condition

    \begin{mathe}[mc]{rcl}
        T_{i} \overset{\tinytopPW}{\longrightarrow} T
            &:\Longleftrightarrow
                &\forall{n\in\ntrlpos:~}T_{i}^{n}\overset{\tinytopWOT}{\longrightarrow}T^{n}\\
    \end{mathe}

\continueparagraph
for all nets ${(T_{i})_{i}\subseteq\BoundedOps{\HilbertRaum}}$ and all ${T\in\BoundedOps{\HilbertRaum}}$.

We first note the following well-known basic results about the complete metrisability
of the operator spaces
(\cf
    \cite[Exercise~3.4~(5), Exercise~4.9, and Examples~9.B~(6)]{kech1994}
    and
    \cite[Lemma~2.1]{eisner2010typicalContraction}%
):

\begin{prop}
\makelabel{prop:space-of-contractions-Poln:sig:article-simpl-wk-problem-raj-dahya}
    Let $A$ be either $\OpSpaceC{\HilbertRaum}$, $\OpSpaceI{\HilbertRaum}$ or $\OpSpaceU{\HilbertRaum}$.
    Then $(A,\topSOT)$ and $(A,\topWOT)$ are Polish.
    Moreover, the $\topSOT$- and $\topWOT$-topologies
    coincide in the case of
        $\OpSpaceI{\HilbertRaum}$ and $\OpSpaceU{\HilbertRaum}$
    respectively.
\end{prop}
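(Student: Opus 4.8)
The plan is to establish separability and metrisability uniformly for all three spaces under both topologies, treat completeness separately for $\topWOT$ and $\topSOT$, and then exploit the isometric relation $\norm{T\xi}=\norm{\xi}$ to collapse the two topologies on $\OpSpaceI{\HilbertRaum}$ and $\OpSpaceU{\HilbertRaum}$. First I would fix an orthonormal basis $(e_n)_{n\in\ntrl}$ of $\HilbertRaum$ and record the compatible metrics on the bounded set $\OpSpaceC{\HilbertRaum}$:
\[
    d_{\topSOT}(S,T) = \sum_{n\in\ntrl} 2^{-n}\,\norm{(S-T)e_n},
    \qquad
    d_{\topWOT}(S,T) = \sum_{m,n\in\ntrl} 2^{-(m+n)}\,\bignorm{\brkt{(S-T)e_m,e_n}}.
\]
Separability under either topology is inherited from that of $\HilbertRaum$ via the coordinate embedding $T\mapsto(Te_n)_n$, so the real work is completeness. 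For $(\OpSpaceC{\HilbertRaum},\topWOT)$ I would appeal to compactness: the unit ball is $\topWOT$-compact by Banach--Alaoglu, and a compact metrisable space is Polish. For $(\OpSpaceC{\HilbertRaum},\topSOT)$ I would argue directly: a $d_{\topSOT}$-Cauchy sequence $(T_k)_k$ makes each $(T_ke_n)_k$ norm-Cauchy, the pointwise limits define an operator $T$ on the basis, and the uniform bound $\norm{T_k}\le 1$ passes to $\norm{T}\le 1$, so $T\in\OpSpaceC{\HilbertRaum}$ and $T_k\to T$ in $\topSOT$.

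Next I would prove the coincidence $\topSOT=\topWOT$ on $\OpSpaceI{\HilbertRaum}$. Since $\topSOT$ is always finer than $\topWOT$, it suffices to show that $\topWOT$-convergence of isometries is already $\topSOT$-convergence. If $T_i,T$ are isometries with $T_i\to T$ in $\topWOT$, then
\[
    \norm{(T_i-T)\xi}^2
    = \norm{T_i\xi}^2 - 2\Re\brkt{T_i\xi,T\xi} + \norm{T\xi}^2
    = 2\norm{\xi}^2 - 2\Re\brkt{T_i\xi,T\xi}
    \longrightarrow 0,
\]
because $\brkt{T_i\xi,T\xi}\to\brkt{T\xi,T\xi}=\norm{\xi}^2$. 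This gives the claim for $\OpSpaceI{\HilbertRaum}$, and a fortiori for $\OpSpaceU{\HilbertRaum}$. Polishness of $\OpSpaceI{\HilbertRaum}$ then follows by showing it is $\topSOT$-closed in $\OpSpaceC{\HilbertRaum}$: an $\topSOT$-limit $T$ of isometries satisfies $\norm{T\xi}=\lim_k\norm{T_k\xi}=\norm{\xi}$, hence is itself an isometry, and a closed subset of the complete space $(\OpSpaceC{\HilbertRaum},\topSOT)$ is complete, hence Polish.

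The main obstacle is $\OpSpaceU{\HilbertRaum}$, which is \emph{not} $\topSOT$-closed, since $\topSOT$-limits of unitaries may be proper (non-surjective) isometries. The resolution is to observe that on unitaries $\topWOT$-convergence $u_i\to u$ automatically forces $\topWOT$-convergence of the adjoints $u_i^{*}\to u^{*}$, as $\brkt{u_i^{*}\xi,\eta}=\quer{\brkt{u_i\eta,\xi}}$; since the adjoints are again isometries, the identity above upgrades this to simultaneous $\topSOT$-convergence of $u_i$ and $u_i^{*}$. Hence on $\OpSpaceU{\HilbertRaum}$ the topologies $\topWOT$, $\topSOT$, and the strong-$*$ topology all coincide, and I would metrise the last by $d_{*}(S,T)=d_{\topSOT}(S,T)+d_{\topSOT}(S^{*},T^{*})$. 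For completeness, a $d_{*}$-Cauchy sequence yields $\topSOT$-limits $u_k\to a$ and $u_k^{*}\to b$; both $a$ and $b$ are isometries, $b=a^{*}$ by passing to the limit in $\brkt{u_k\xi,\eta}=\brkt{\xi,u_k^{*}\eta}$, and an isometry whose adjoint is also an isometry satisfies $a^{*}a=aa^{*}=\onematrix$, so $a$ is unitary. Thus $(\OpSpaceU{\HilbertRaum},\topSOT)$ is separable and completely metrisable, hence Polish, which completes the proof.
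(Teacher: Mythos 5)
Your proposal is correct in all essential steps. Note, however, that the paper never actually proves this proposition: it records it as a well-known fact, pointing to \cite[Exercise~3.4~(5), Exercise~4.9, and Examples~9.B~(6)]{kech1994} and \cite[Lemma~2.1]{eisner2010typicalContraction}. What you have written is a self-contained reconstruction of precisely the material those references cover, along the classical route: compatible metrics on the norm-bounded set $\OpSpaceC{\HilbertRaum}$ built from an orthonormal basis; $\topWOT$-completeness via weak compactness of the unit ball; $\topSOT$-completeness by a direct Cauchy-sequence argument; the norm identity showing that $\topWOT$-convergence of isometries to an isometry is automatically $\topSOT$-convergence, whence the two topologies agree on $\OpSpaceI{\HilbertRaum}$ and $\OpSpaceU{\HilbertRaum}$; $\topSOT$-closedness of $\OpSpaceI{\HilbertRaum}$ in $\OpSpaceC{\HilbertRaum}$; and, for the genuinely delicate case of $\OpSpaceU{\HilbertRaum}$ (which is \emph{not} $\topSOT$-closed), the strong-$*$ trick of tracking $u$ and $u^{*}$ simultaneously via the metric $d_{*}$, together with the observation that $\topWOT$-convergence of unitaries forces $\topWOT$-convergence of adjoints. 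That last step is the one subtle point in the whole statement, and you resolve it exactly as the standard sources do; it is also the step the paper implicitly leans on later when it needs $(\OpSpaceU{\HilbertRaum},\topSOT)$ to be Polish in \Cref{prop:spHs-and-spUHs-polish:sig:article-simpl-wk-problem-raj-dahya}. One cosmetic remark: for $\topWOT$-separability the coordinate embedding $T\mapsto(Te_{n})_{n}$ is the natural one only for $\topSOT$; either use the matrix-coefficient embedding $T\mapsto(\brkt{Te_{m},e_{n}})_{m,n}$ or simply note that $\topSOT$-separability implies $\topWOT$-separability because $\topWOT$ is coarser.
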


In addition to spaces of operators,
we shall also consider the spaces of unitary and contractive $\C0$-semigroups.
These can be viewed as subspaces of
    $\Cts{\realsNonNeg}{A}$, where $A$ can be $\OpSpaceU{\HilbertRaum}$ or $\OpSpaceC{\HilbertRaum}$,
topologised appropriately.
These spaces of operator-valued functions shall be denoted and topologised as follows:

\begin{defn}
\makelabel{defn:standard-funct-spaces:sig:article-simpl-wk-problem-raj-dahya}
    Let
        ${\SpCs(\realsNonNeg) \colonequals \Cts{\realsNonNeg}{(\OpSpaceC{\HilbertRaum},\topSOT)}}$
    and
        ${\SpUs(\realsNonNeg) \colonequals \Cts{\realsNonNeg}{(\OpSpaceU{\HilbertRaum},\topSOT)}}$,
    \idest these denote the spaces of $\topSOT$-continuous contraction- \respectively unitary-valued functions defined on $\realsNonNeg$.
    Further denote via
        $\SpHs(\realsNonNeg)$ and $\SpUHs(\realsNonNeg)$
        the subspaces of
        $\topSOT$-continuous contractive \respectively unitary semigroups over $\HilbertRaum$.
    Let $\SpCw(\realsNonNeg)$, $\SpUw(\realsNonNeg)$, $\SpHw(\realsNonNeg)$, $\SpUHw(\realsNonNeg)$ denote the $\topWOT$-continuous counterparts.
\end{defn}

\begin{rem}
\makelabel{rem:standard-funct-spaces:arbitrary-top-monoids:sig:article-simpl-wk-problem-raj-dahya}
    Note that we may similarly define function spaces
        $\SpCs(\RaumX)$ and $\SpUs(\RaumX)$
    for any topological space $\RaumX$, not just $\realsNonNeg$.
    We may also define the subspaces of continuous semigroups for any topological monoid,
        $(M,\cdot,1)$, not just $(\realsNonNeg,+,0)$.
    Here, we call any operator-valued function ${T:M\to\BoundedOps{\HilbertRaum}}$
    a semigroup over $\HilbertRaum$ on $(M,\cdot,1)$,
    if it satisfies
        $T(1)=\onematrix$ and $T(s\cdot t)=T(s)T(t)$ for all $s,t\in M$.
    For example, for $(\realsNonNeg^{d},+,\zerovector)$
    the semigroups are referred to as multiparameter semigroups
    and in the case of $(\ntrlzero,+,0)$,
    we may speak of discrete-time semigroups.
\end{rem}

The set of all continuous-valued operator-valued functions is a sufficiently broad but natural context in which to work.
It suffices to topologise the full function spaces and simply endow the subspace of semigroups with the relative topology.
We can additionally detach ourselves for the moment from the space of time points, $\realsNonNeg$,
and consider any locally compact Polish space in its stead.
And, taking notice of \Cref{prop:space-of-contractions-Poln:sig:article-simpl-wk-problem-raj-dahya},
we may also replace the space of values by any Polish space.
This motivates usage of the following general topological definitions.

\begin{defn}
    For any topological space, $\RaumX$, we denote with $\KmpRm{\RaumX}$
    the collection of all compact subsets of $\RaumX$.
\end{defn}

\begin{defn}
    \makelabel{defn:func-spaces-top:u:abstract:sig:article-simpl-wk-problem-raj-dahya}
    Let $\RaumX$ be any \emph{compact} space and $(\RaumY,d)$ be a metric space.
    On the space of continuous functions, $\Cts{\RaumX}{\RaumY}$,
    the topology of
        \highlightTerm{uniform convergence}
    is defined via the convergence condition

    \begin{mathe}[mc]{rcl}
        f^{(i)} \overset{u}{\underset{i}{\longrightarrow}} f
            &:\Longleftrightarrow
                &\displaysup_{t\in\RaumX}d(f^{(i)}(t),f(t))\underset{i}{\longrightarrow}0\\
    \end{mathe}

    \continueparagraph
    for all nets $(f^{(i)})_{i}\subseteq\Cts{\RaumX}{\RaumY}$
    and all $f\in\Cts{\RaumX}{\RaumY}$.
    We denote this space as $(\Cts{\RaumX}{\RaumY},u)$.
\end{defn}

\begin{rem}
    \makelabel{rem:u-indep-metric:sig:article-simpl-wk-problem-raj-dahya}
    If $\RaumX$ is compact, the topology of uniform convergence is
    independent of the choice of metric on $\RaumY$.
    Moreover, even if $\RaumY$ is completely metrisable,
    the choice of a compatible metric, $d$, in the above definition
    does not even have to be complete.
    See \exempli \cite[Lemma~3.98]{aliprantis2005} for a proof of this.
\end{rem}

\begin{defn}[$\topLOC$-topology]
    \makelabel{defn:func-spaces-top:loc:abstract:sig:article-simpl-wk-problem-raj-dahya}
    Let $\RaumX$ be any topological space and $(\RaumY,d)$ be a metric space.
    On the space of continuous functions, $\Cts{\RaumX}{\RaumY}$,
    we define the topology of
        \highlightTerm{uniform convergence on compact subsets of $\RaumX$}
    as follows

    \begin{mathe}[mc]{rcll}
        f^{(i)} \overset{\tinytopLOC}{\underset{i}{\longrightarrow}} f
            &:\Longleftrightarrow
                &\forall{K\in\KmpRm{\RaumX}:~}
                    f^{(i)}\restr{K}\overset{u}{\underset{i}{\longrightarrow}}f\restr{K}\\
    \end{mathe}

    \continueparagraph
    for all nets $(f^{(i)})_{i}\subseteq\Cts{\RaumX}{\RaumY}$
    and all $f\in\Cts{\RaumX}{\RaumY}$.
    We refer to this as the $\topLOC$-topology.
\end{defn}

\begin{rem}
    \makelabel{rem:loc-indep-metric:sig:article-simpl-wk-problem-raj-dahya}
    Clearly, if $\RaumX$ itself is compact, then the topologies on
        $(\Cts{\RaumX}{\RaumY},\topLOC)$
        and $(\Cts{\RaumX}{\RaumY},u)$
    coincide, so we can view the $\topLOC$-topology
    as a generalisation of the uniform topology.
    Furthermore, as per \Cref{rem:u-indep-metric:sig:article-simpl-wk-problem-raj-dahya},
    since the topology on each space
        $(\Cts{K}{\RaumY},u)$ for $K\subseteq\RaumX$ compact
    is independent of the choice of compatible metric on $\RaumY$,
    the $\topLOC$-topology is clearly independent of the choice of metric on $\RaumY$.
\end{rem}

\begin{rem}
    \makelabel{rem:loc-top-equals-kompakt-open-top:sig:article-simpl-wk-problem-raj-dahya}
    Recall that for arbitrary (not necessarily metrisable) topological spaces,
    the \highlightTerm{compact-open topology} on $\Cts{\RaumX}{\RaumY}$
    is generated by a subbasis of open sets of the form
        $\{f\in\Cts{\RaumX}{\RaumY} \mid f(K)\subseteq U\}$
    for $K\in\KmpRm{\RaumX}$ and $U\subseteq\RaumY$ open.
    If $\RaumY$ is now assumed to be metrisable,
    then the compact-open topology
    is equivalent to the $\topLOC$-topology as per \Cref{defn:func-spaces-top:loc:abstract:sig:article-simpl-wk-problem-raj-dahya}.
    To see this, fix a compatible metric, $d$, on $\RaumY$.
    Relying on the definitions, one can see that the sets
        $\{g\in\Cts{\RaumX}{\RaumY} \mid \sup_{t \in K}d(g(t),f(t)) < \eps\}$
        for $f \in \Cts{\RaumX}{\RaumY}$, $K\in\KmpRm{\RaumX}$, and $\eps > 0$,
    constitute a basis for the $\topLOC$-topology.
    Observe (i) that for each
        $f\in\Cts{\RaumX}{\RaumY}$,
        $K\in\KmpRm{\RaumX}$,
        and
        $\eps>0$,
    from the compactness of $K$ and continuity of $f$
    one can find a finite open cover
        $\cal{O}$ of $K$
        and
        $t_{W}\in K\cap W$ for each $W\in\cal{O}$,
    such that $\sup_{t\in K\cap W}d(f(t),f(t_{W}))<\eps/2$
    for all $W\in\cal{O}$.
    It follows that for each $g\in\Cts{\RaumX}{\RaumY}$,
    if
        $g(K\cap\quer{W})\subseteq\oBall{f(t_{W})}_{\eps/2}$ for each $W\in\cal{O}$
    then $\sup_{t\in K}d(g(t),f(t))<\eps$.
    Conversely, observe (ii) that for each
        $f\in\Cts{\RaumX}{\RaumY}$,
        $n\in\ntrlpos$,
        $K_{1},K_{2},\ldots,K_{n}\in\KmpRm{\RaumX}$,
        and
        $U_{1},U_{2},\ldots,U_{n}\subseteq\RaumY$ open,
    such that
        $f(K_{i})\subseteq U_{i}$ for each $i$,
    from the compactness of the $K_{i}$ and hence of the continuous images, $f(K_{i})$,
    one can find some
        $\eps>0$
    such that
        $\oBall{f(K_{i})}_{\eps}\subseteq U_{i}$ for each $i$.
    It follows that for each $g\in\Cts{\RaumX}{\RaumY}$,
    if
        $\sup_{t\in\bigcup_{i=1}^{n}K_{i}}d(g(t),f(t))<\eps$,
    then $g(K_{i})\subseteq U_{i}$ for each $i$.
    Observations (i) and (ii) imply that
        the $\topLOC$-topology
        and
        the compact-open topology
    coincide on $\Cts{\RaumX}{\RaumY}$,
    when $\RaumY$ is metrisable.
\end{rem}

This general definition allows us to readily demonstrate complete metrisability.

\begin{prop}
\makelabel{prop:loc-abstract-basic:sig:article-simpl-wk-problem-raj-dahya}
    Let $\RaumX$ be a locally compact Polish space
    and $\RaumY$ be Polish.
    Then $(\Cts{\RaumX}{\RaumY},\topLOC)$ is Polish.
\end{prop}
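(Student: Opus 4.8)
The plan is to realise $(\Cts{\RaumX}{\RaumY},\topLOC)$ as a closed subspace of a countable product of Polish spaces, and then invoke the two standard permanence properties of Polishness: a countable product of Polish spaces is Polish, and a closed subspace of a Polish space is Polish. The work lies entirely in setting up the right product and identifying the image.

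First I would exploit local compactness together with second countability (a consequence of $\RaumX$ being Polish) to produce a compact exhaustion, \idest a sequence $(K_{n})_{n}$ in $\KmpRm{\RaumX}$ with $K_{n}\subseteq\topInterior{K_{n+1}}$ and $\bigcup_{n}K_{n}=\RaumX$. The decisive consequence is that every $K\in\KmpRm{\RaumX}$ is contained in some $K_{n}$: indeed $K$ is covered by the open sets $\topInterior{K_{n}}$, so by compactness and the nesting it lies in a single $K_{N}$. Hence $\topLOC$-convergence is equivalent to uniform convergence on each $K_{n}$, that is, $\topLOC$ is precisely the initial topology induced by the restriction maps $\rho_{n}\colon f\mapsto f\restr{K_{n}}$ into the spaces $(\Cts{K_{n}}{\RaumY},u)$.

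Second I would observe that each factor $(\Cts{K_{n}}{\RaumY},u)$ is Polish. This is the classical fact that, for $K$ compact metrisable and $\RaumY$ Polish, the space of continuous maps under the sup-metric is Polish: fixing a complete compatible metric $d$ on $\RaumY$, a uniformly Cauchy sequence converges pointwise by completeness of $d$, and the uniform limit of continuous functions is continuous, giving completeness, while separability is standard (\cf \cite{kech1994}). By the permanence of Polishness under countable products, $\prod_{n}(\Cts{K_{n}}{\RaumY},u)$ is Polish, and the diagonal restriction map $\Phi\colon f\mapsto(f\restr{K_{n}})_{n}$ is a topological embedding, precisely because $\topLOC$ is the initial topology of the $\rho_{n}$ by the previous paragraph. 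Finally I would identify the image of $\Phi$ as the set of coherent sequences $\{(g_{n})_{n}:g_{n+1}\restr{K_{n}}=g_{n}\text{ for all }n\}$ and argue it is closed: for each $n$ the two maps $(g_{m})_{m}\mapsto g_{n}$ and $(g_{m})_{m}\mapsto g_{n+1}\restr{K_{n}}$ into $(\Cts{K_{n}}{\RaumY},u)$ are continuous (restriction to a compact subset is $u$-continuous), so their equaliser is closed since the target is metrisable and hence Hausdorff; intersecting over $n$ preserves closedness. That the image is exactly the coherent sequences uses the exhaustion once more: a coherent $(g_{n})_{n}$ glues to a well-defined $f$ with $f\restr{K_{n}}=g_{n}$, and $f$ is continuous since each point lies in some open $\topInterior{K_{n+1}}$ on which $f$ coincides with the continuous $g_{n+1}$.

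I expect the main obstacle to be the careful verification that $\Phi$ is a homeomorphism onto a \emph{closed} image, rather than any single estimate. Concretely, two points need care: that $\topLOC$ genuinely equals the initial topology of the restrictions (this is exactly where local compactness does its work, through the containment of arbitrary compact sets in some $K_{n}$), and that the glued limit function is continuous. The Polishness of the individual factors $(\Cts{K_{n}}{\RaumY},u)$ is routine and may simply be cited.
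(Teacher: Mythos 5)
Your proof is correct and follows essentially the same route as the paper: embed $(\Cts{\RaumX}{\RaumY},\topLOC)$ into the countable product $\prod_{n}(\Cts{K_{n}}{\RaumY},u)$ via restrictions, identify the image as the closed set of coherent sequences, and invoke Polishness of countable products and closed subspaces. The only (immaterial) difference is that you use a nested compact exhaustion with consecutive coherence $g_{n+1}\restr{K_{n}}=g_{n}$, whereas the paper allows an arbitrary countable family of compact sets whose interiors cover $\RaumX$ and imposes pairwise coherence on intersections $K_{m}\cap K_{n}$.
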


    \begin{proof}
        Since $\RaumX$ is locally compact and Polish,
        one can find a countable collection
            $\compactcover\subseteq\KmpRm{\RaumX}$
        such that
            $\{\topInterior{K}\mid K\in\compactcover\}$ is an open cover of $\RaumX$
        (%
            \exempli
            in the case of $\RaumX=\realsNonNeg$, one may set $\compactcover \colonequals \{[0,n]\mid n\in\ntrl\}$,
            and in the case of $\RaumX=\ntrlzero$, one can set $\compactcover \colonequals \{\{0,1,\ldots,n\}\mid n\in\ntrlzero\}$%
        ).
        Letting $(K_{n})_{n\in\ntrl}$ be an enumeration of $\compactcover$,
        we construct the map

        \begin{mathe}[mc]{rcccl}
            \Psi &: &\Cts{\RaumX}{\RaumY}
                    &\to &\prod_{n\in\ntrl}\Cts{K_{n}}{\RaumY}\\
                &&f &\mapsto &(f\restr{K_{n}})_{n\in\ntrl},\\
        \end{mathe}

        \continueparagraph
        where the right hand space is endowed with the product topology
        and each of the factors is endowed with the uniform topology.
        Exploiting
            the fact that the interiors of the compact sets in $\compactcover$ cover $\RaumX$
            and that $\RaumX$ is locally compact,
        it is straightforward to see that
        this defines an injective, bi-continuous map,
        whose image is the subspace of coherent sequences,

        \begin{mathe}[mc]{rcccl}
            \spaceCoher(\compactcover) &\colonequals &\{
                (a_{n})_{K_{n}} \in \prod_{n\in\ntrl}\Cts{K_{n}}{\RaumY}
                \mid
                \forall{m,n\in\ntrl:~}
                    a_{m}\restr{K_{m}\cap K_{n}}
                    =a_{n}\restr{K_{m}\cap K_{n}}
            \},\\
        \end{mathe}

        \continueparagraph
        which in turn is a closed subspace.
        Since $\RaumX$ and $\RaumY$ are Polish,
        each of the factor spaces, $\Cts{K_{n}}{\RaumY}$, are Polish
        (see \exempli \cite[Theorem~4.19]{kech1994} or \cite[Lemma~3.96--7,~3.99]{aliprantis2005}).
        And since the class of Polish spaces is closed under countable products
        (see \exempli \cite[Corollary~3.39]{aliprantis2005}), it follows that the product space and thus also the closed subspace,
        ${\ran(\Psi)=\spaceCoher(\compactcover)}$, are Polish.
        Since $\Psi$ is a topological embedding,
        it follows that $(\Cts{\RaumX}{\RaumY},\topLOC)$ itself is Polish.
    \end{proof}

Now consider the space, $\RaumY=A\subseteq\OpSpaceC{\HilbertRaum}$,
of unitaries or contractions over $\HilbertRaum$
under the $\topSOT$- and $\topWOT$-topologies.
Since by \Cref{prop:space-of-contractions-Poln:sig:article-simpl-wk-problem-raj-dahya}
these spaces are Polish,
we can apply \Cref{prop:loc-abstract-basic:sig:article-simpl-wk-problem-raj-dahya}
to see that
    $(\Cts{\RaumX}{(A,\topSOT)},\topLOC)$
and $(\Cts{\RaumX}{(A,\topWOT)},\topLOC)$
are Polish.

As per \Cref{rem:loc-top-equals-kompakt-open-top:sig:article-simpl-wk-problem-raj-dahya}
the general $\topLOC$-topology coincides with the compact-open topology.
In a very similar way as was argued in that remark,
it is a straightforward matter to observe
that the compact-open topologies on
    $\Cts{\RaumX}{(A,\topSOT)}$ and $\Cts{\RaumX}{(A,\topWOT)}$
may be equivalently presented as the $\toplocSOT$- and $\toplocWOT$-topologies respectively,
which are defined as follows:

\begin{defn}[$\toplocSOT$-Topology]
    \makelabel{defn:func-spaces-top:loc-sot:abstract:sig:article-simpl-wk-problem-raj-dahya}
    Let $\RaumX$ be any topological space.
    Let $A\subseteq\OpSpaceC{\HilbertRaum}$ be endowed with the $\topSOT$-topology.
    On the space of \topSOT-continuous contraction-valued functions, $\Cts{\RaumX}{A}$,
    the topology of
        \highlightTerm{uniform \topSOT-convergence on compact subsets of $\RaumX$}
    is defined via the convergence condition

    \begin{mathe}[mc]{rcl}
        T_{i} \overset{\tinytoplocSOT}{\underset{i}{\longrightarrow}} T
            &:\Longleftrightarrow
                &\forall{\xi\in\HilbertRaum:~}
                \forall{K\in\KmpRm{\RaumX}:~}
                \displaysup_{t\in K}\|(T_{i}(t)-T(t))\xi\|\underset{i}{\longrightarrow}0\\
    \end{mathe}

    \continueparagraph
    for all nets $(T_{i})_{i}\subseteq\Cts{\RaumX}{A}$
    and all $T\in\Cts{\RaumX}{A}$.
    We refer to this as the $\toplocSOT$-topology.
\end{defn}

\begin{defn}[$\toplocWOT$-Topology]
    \makelabel{defn:func-spaces-top:loc-wot:abstract:sig:article-simpl-wk-problem-raj-dahya}
    Let $\RaumX$ be any topological space.
    Let $A\subseteq\OpSpaceC{\HilbertRaum}$ be endowed with the $\topWOT$-topology.
    On the space of \topWOT-continuous contraction-valued functions, $\Cts{\RaumX}{A}$,
    the topology of
        \highlightTerm{uniform \topWOT-convergence on compact subsets of $\RaumX$}
    is defined via the convergence condition

    \begin{mathe}[mc]{rcl}
        T_{i} \overset{\tinytoplocWOT}{\underset{i}{\longrightarrow}} T
            &:\Longleftrightarrow
                &\forall{\xi,\eta\in\HilbertRaum:~}
                \forall{K\in\KmpRm{\RaumX}:~}
                \displaysup_{t\in K}|\BRAKET{(T_{i}(t)-T(t))\xi}{\eta}|\underset{i}{\longrightarrow}0\\
    \end{mathe}

    \continueparagraph
    for all nets $(T_{i})_{i}\subseteq\Cts{\RaumX}{A}$
    and all $T\in\Cts{\RaumX}{A}$.
    We refer to this as the $\toplocWOT$-topology.
\end{defn}

These definitions and \Cref{prop:loc-abstract-basic:sig:article-simpl-wk-problem-raj-dahya}
allow us obtain the following basic complete metrisability results
for the spaces defined in \Cref{defn:standard-funct-spaces:sig:article-simpl-wk-problem-raj-dahya}.

\begin{prop}[Complete metrisability of operator-valued function spaces]
    \makelabel{prop:op-func-spaces-are-polish:basic-func:sig:article-simpl-wk-problem-raj-dahya}
    Let $\RaumX$ be a locally compact Polish space,
    \exempli $\RaumX\in\{\realsNonNeg^{d},\ntrlzero^{d}\mid d\in\ntrlpos\}$.
    Then
        $(\SpCs(\RaumX),\toplocSOT)$,
        $(\SpUs(\RaumX),\toplocSOT)$,
        $(\SpCw(\RaumX),\toplocWOT)$, and
        $(\SpUw(\RaumX),\toplocWOT)$,
    are Polish spaces.
    Furthermore,
        $(\SpUs(\RaumX),\toplocSOT)=(\SpUw(\RaumX),\toplocWOT)$,
    \idest, these spaces coincide in terms of their elements and their topologies.
\end{prop}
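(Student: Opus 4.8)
The plan is to exhibit each of the four spaces as a concrete instance of the abstract pair $(\Cts{\RaumX}{\RaumY},\topLOC)$ from \Cref{defn:func-spaces-top:loc:abstract:sig:article-simpl-wk-problem-raj-dahya}, with $\RaumY$ a suitable Polish operator space, and then quote \Cref{prop:loc-abstract-basic:sig:article-simpl-wk-problem-raj-dahya}. Concretely, by \Cref{defn:standard-funct-spaces:sig:article-simpl-wk-problem-raj-dahya} (and \Cref{rem:standard-funct-spaces:arbitrary-top-monoids:sig:article-simpl-wk-problem-raj-dahya} for general $\RaumX$) we have $\SpCs(\RaumX)=\Cts{\RaumX}{(\OpSpaceC{\HilbertRaum},\topSOT)}$, $\SpUs(\RaumX)=\Cts{\RaumX}{(\OpSpaceU{\HilbertRaum},\topSOT)}$, and likewise $\SpCw(\RaumX)$, $\SpUw(\RaumX)$ with $\topWOT$ in place of $\topSOT$. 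By \Cref{prop:space-of-contractions-Poln:sig:article-simpl-wk-problem-raj-dahya} each of the four target spaces $(\OpSpaceC{\HilbertRaum},\topSOT)$, $(\OpSpaceU{\HilbertRaum},\topSOT)$, $(\OpSpaceC{\HilbertRaum},\topWOT)$, $(\OpSpaceU{\HilbertRaum},\topWOT)$ is Polish, so all four function spaces will fall under the hypotheses of \Cref{prop:loc-abstract-basic:sig:article-simpl-wk-problem-raj-dahya} (using that $\RaumX$ is locally compact and Polish) once the topologies are matched.

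The one point requiring care --- and the main, if modest, obstacle --- is to verify that the bespoke convergence conditions of \Cref{defn:func-spaces-top:loc-sot:abstract:sig:article-simpl-wk-problem-raj-dahya} and \Cref{defn:func-spaces-top:loc-wot:abstract:sig:article-simpl-wk-problem-raj-dahya} really do present the $\topLOC$-topology on the respective function spaces. For this I would fix a compatible metric for the $\topSOT$-topology, e.g. $d(S,T)=\sum_{n}2^{-n}\min\{1,\norm{(S-T)\xi_{n}}\}$ for a sequence $(\xi_{n})_{n}$ dense in $\HilbertRaum$ (and analogously $d(S,T)=\sum_{n,m}2^{-(n+m)}\min\{1,|\BRAKET{(S-T)\xi_{n}}{\xi_{m}}|\}$ for the $\topWOT$), and check that for a compact $K\subseteq\RaumX$ the condition $\sup_{t\in K}d(T_{i}(t),T(t))\to 0$ is equivalent to $\sup_{t\in K}\norm{(T_{i}(t)-T(t))\xi}\to 0$ for every $\xi\in\HilbertRaum$. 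The nontrivial direction passes from the dense sequence $(\xi_{n})_{n}$ to an arbitrary $\xi$ using that all values are contractions, hence uniformly bounded in norm. This is exactly the computation flagged in the discussion preceding \Cref{defn:func-spaces-top:loc-sot:abstract:sig:article-simpl-wk-problem-raj-dahya} and is a routine variant of \Cref{rem:loc-top-equals-kompakt-open-top:sig:article-simpl-wk-problem-raj-dahya}; \Cref{rem:u-indep-metric:sig:article-simpl-wk-problem-raj-dahya} then guarantees that the resulting topology is independent of these auxiliary choices.

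With the identifications $\toplocSOT=\topLOC$ on $\Cts{\RaumX}{(\OpSpaceC{\HilbertRaum},\topSOT)}$ and on $\Cts{\RaumX}{(\OpSpaceU{\HilbertRaum},\topSOT)}$, and $\toplocWOT=\topLOC$ on the two $\topWOT$-valued spaces, I would apply \Cref{prop:loc-abstract-basic:sig:article-simpl-wk-problem-raj-dahya} four times to conclude that $(\SpCs(\RaumX),\toplocSOT)$, $(\SpUs(\RaumX),\toplocSOT)$, $(\SpCw(\RaumX),\toplocWOT)$ and $(\SpUw(\RaumX),\toplocWOT)$ are all Polish.

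Finally, for the coincidence $(\SpUs(\RaumX),\toplocSOT)=(\SpUw(\RaumX),\toplocWOT)$, I would invoke the last clause of \Cref{prop:space-of-contractions-Poln:sig:article-simpl-wk-problem-raj-dahya}: on $\OpSpaceU{\HilbertRaum}$ the $\topSOT$- and $\topWOT$-topologies agree, so $(\OpSpaceU{\HilbertRaum},\topSOT)$ and $(\OpSpaceU{\HilbertRaum},\topWOT)$ are literally the same topological space. Hence the two collections of continuous functions coincide as sets --- continuity being taken against one and the same target topology --- i.e. $\SpUs(\RaumX)=\SpUw(\RaumX)$; and since $\toplocSOT$ and $\toplocWOT$ are both just the $\topLOC$-topology attached to this common Polish target, they coincide as topologies as well. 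This yields the asserted equality of spaces and completes the argument.
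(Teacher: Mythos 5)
Your proposal is correct and takes essentially the same route as the paper's own proof: identify the four spaces as instances of $(\Cts{\RaumX}{\RaumY},\topLOC)$ with $\RaumY$ Polish by \Cref{prop:space-of-contractions-Poln:sig:article-simpl-wk-problem-raj-dahya}, invoke \Cref{prop:loc-abstract-basic:sig:article-simpl-wk-problem-raj-dahya}, and obtain the final equality from the coincidence of the $\topSOT$- and $\topWOT$-topologies on $\OpSpaceU{\HilbertRaum}$ together with the fact that the $\topLOC$-topology depends only on the topology of the target space. The sole difference is cosmetic: you spell out the metric computation identifying the $\toplocSOT$- and $\toplocWOT$-convergence conditions with the $\topLOC$-topology, whereas the paper delegates exactly this verification to the discussion preceding \Cref{defn:func-spaces-top:loc-sot:abstract:sig:article-simpl-wk-problem-raj-dahya}.
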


    \begin{proof}
        Unpacking the notation in \Cref{defn:standard-funct-spaces:sig:article-simpl-wk-problem-raj-dahya}
        we need to show that
            $(\Cts{\RaumX}{(A,\topSOT)},\toplocSOT)$
            and
            $(\Cts{\RaumX}{(A,\topWOT)},\toplocWOT)$
        are Polish for
            $A\in\{\OpSpaceC{\HilbertRaum},\OpSpaceU{\HilbertRaum}\}$.
        Fixing $A$, we note by
            \Cref{%
                prop:space-of-contractions-Poln:sig:article-simpl-wk-problem-raj-dahya,%
                prop:loc-abstract-basic:sig:article-simpl-wk-problem-raj-dahya%
            }
        that
            $(\Cts{\RaumX}{(A,\topSOT)},\topLOC)$
        and
            $(\Cts{\RaumX}{(A,\topWOT)},\topLOC)$
        are Polish.
        As per the comments before
            \Cref{defn:func-spaces-top:loc-sot:abstract:sig:article-simpl-wk-problem-raj-dahya},
        these spaces are equal to
            $(\Cts{\RaumX}{(A,\topSOT)},\toplocSOT)$
        and $(\Cts{\RaumX}{(A,\topWOT)},\toplocWOT)$
        respectively.
        So the first claims hold.

        For the final claim, let $A=\OpSpaceU{\HilbertRaum}$.
        By \Cref{prop:space-of-contractions-Poln:sig:article-simpl-wk-problem-raj-dahya} $(A,\topSOT)=(A,\topWOT)$.
        Thus
            $(\Cts{\RaumX}{(A,\topSOT)},\topLOC)
            =(\Cts{\RaumX}{(A,\topWOT)},\topLOC)$,
        since the $\topLOC$-topology depends only on the topologies of the underlying spaces
        (\cf \Cref{rem:loc-indep-metric:sig:article-simpl-wk-problem-raj-dahya}).
        As above, these spaces are equal to
            $(\Cts{\RaumX}{(A,\topSOT)},\toplocSOT)$
        and $(\Cts{\RaumX}{(A,\topWOT)},\toplocWOT)$
        respectively.
        Hence $(\SpUs(\RaumX),\toplocSOT)=(\SpUw(\RaumX),\toplocWOT)$.
    \end{proof}

\begin{rem}
\makelabel{rem:direct-explanation-equality-of-topologies:sig:article-simpl-wk-problem-raj-dahya}
    Note that we relied on the general definition of the $\topLOC$-topology
    in order to demonstrate complete metrisability.
    We also drew out its usage here in order to argue that
    the $\toplocSOT$- and $\toplocWOT$-topologies coincide
    on ${\SpUs(\RaumX)=\SpUw(\RaumX)}$.
    For the reader's convenience we present a direct proof of the latter
    as an alternative:

    Consider a net of $\topSOT$-continuous ($\equiv$ $\topWOT$-continuous)
    unitary-valued functions ${(T^{(i)})_{i}\subseteq\SpUs(\RaumX)=\SpUw(\RaumX)}$
    and ${T\in\SpUs(\RaumX)=\SpUw(\RaumX)}$.
    Clearly, if
        ${T^{(i)}\overset{\tinytoplocSOT}{\underset{i}{\longrightarrow}}T}$,
    then ${T^{(i)}\overset{\tinytoplocWOT}{\underset{i}{\longrightarrow}}T}$.
    For the converse, assume the latter convergence holds.
    Let $K\in\KmpRm{\RaumX}$ and $\xi\in\HilbertRaum$ be arbitrary.
    We need to show

        \begin{mathe}[mc]{rcl}
            \eqtag[eq:1:rem:direct-explanation-equality-of-topologies:sig:article-simpl-wk-problem-raj-dahya]
            \displaysup_{t \in K}\|(T^{(i)}(t)-T(t))\xi\| &\underset{i}{\longrightarrow} &0.\\
        \end{mathe}

    \continueparagraph
    To achieve this, first consider an arbitrary fixed $\eps>0$.
    One may take advantage of the fact that $T$ is $\topSOT$-continuous,
    and thus that $T(\cdot)\xi$ is norm-continuous on the compact subset $K\subseteq\RaumX$,
    to obtain a finite open cover, $\cal{O}$,
    of $K$ and $t_{W}\in W$ for each $W\in\cal{O}$,
    such that

        \begin{mathe}[mc]{c}
            \eqtag[eq:2:rem:direct-explanation-equality-of-topologies:sig:article-simpl-wk-problem-raj-dahya]
            \displaysup_{t\in W\cap K}\|(T(t)-T(t_{W}))\xi\| < \eps
        \end{mathe}

    \continueparagraph
    holds for each $W\in\cal{O}$.
    Since $\cal{O}$ is a cover of $K$, we thus have

    \begin{mathe}[mc]{rcl}
        \eqtag[eq:3:rem:direct-explanation-equality-of-topologies:sig:article-simpl-wk-problem-raj-dahya]
            \displaysup_{t\in K}\|(T^{(i)}(t)-T(t))\xi\|
            &= &\displaymax_{W\in\cal{O}}
                \displaysup_{t\in W\cap K}
                    \|(T^{(i)}(t)-T(t))\xi\|\\
            &\leq &\displaymax_{W\in\cal{O}}
                \displaysup_{t\in W\cap K}
                    \begin{array}[t]{0l}
                        \left(\|(T(t)-T(t_{W}))\xi\|\right.\\
                        + \left.\|(T^{(i)}(t)-T(t_{W}))\xi\|\right)\\
                    \end{array}\\
            &\eqcrefoverset{eq:2:rem:direct-explanation-equality-of-topologies:sig:article-simpl-wk-problem-raj-dahya}{%
                \leq%
            } &\eps +
                \displaymax_{W\in\cal{O}}
                \displaysup_{t\in W\cap K}
                    \|(T^{(i)}(t)-T(t_{W}))\xi\|\\
    \end{mathe}

    \continueparagraph
    for each index $i$.
    Noting also that
        $T^{(i)}(t)$ and $T(t)$
    are isometries for all $t\in\RaumX$ yields

    \begin{mathe}[mc]{rcl}
        \eqtag[eq:4:rem:direct-explanation-equality-of-topologies:sig:article-simpl-wk-problem-raj-dahya]
        \|(T^{(i)}(t)-T(t_{W}))\xi\|^{2}
            &= &\|T^{(i)}(t)\xi\|^{2}
                + \|T(t_{W})\xi\|^{2}
                - 2\RealPart\BRAKET{T^{(i)}(t)\xi}{T(t_{W})\xi}\\
            &= &\begin{array}[t]{0l}
                \|T(t)\xi\|^{2} + \|T(t_{W})\xi\|^{2}
                    - 2\RealPart\BRAKET{T(t)\xi}{T(t_{W})\xi}\\
                    - 2\RealPart\BRAKET{(T^{(i)}(t)-T(t))\xi}{T(t_{W})\xi}\\
                \end{array}\\
            &= &\begin{array}[t]{0l}
                \|(T(t)-T(t_{W}))\xi\|^{2}
                    - 2\RealPart\BRAKET{(T^{(i)}(t)-T(t))\xi}{T(t_{W})\xi}\\
                \end{array}\\
            &\eqcrefoverset{eq:2:rem:direct-explanation-equality-of-topologies:sig:article-simpl-wk-problem-raj-dahya}{%
                \leq%
            } &\eps^{2}
               + 2|\BRAKET{(T^{(i)}(t)-T(t))\xi}{\eta_{W}}|\\
    \end{mathe}

    \continueparagraph
    for all $t\in W\cap K$, all $W\in\cal{O}$, and all indexes $i$,
    where $\eta_{W} \colonequals T(t_{W})\xi$.
    Combining \eqcref{eq:4:rem:direct-explanation-equality-of-topologies:sig:article-simpl-wk-problem-raj-dahya}
    and \eqcref{eq:3:rem:direct-explanation-equality-of-topologies:sig:article-simpl-wk-problem-raj-dahya}
    yields

        \begin{mathe}[mc]{rcl}
            \displaysup_{t\in K}\|(T^{(i)}(t)-T(t))\xi\|
                &\leq
                    &\eps
                    + \displaymax_{W\in\cal{O}}
                        \sqrt{%
                            \eps^{2}
                            + 2\cdot\displaysup_{t\in W\cap K}|\BRAKET{(T(t)-T^{(i)}(t))\xi}{\eta_{W}}|
                        }\\
                &\leq
                    &\eps
                    + \displaymax_{W\in\cal{O}}
                        \sqrt{%
                            \eps^{2}
                            + 2\cdot\displaysup_{t\in K}|\BRAKET{(T(t)-T^{(i)}(t))\xi}{\eta_{W}}|
                        }\\
        \end{mathe}

    \continueparagraph
    for all indexes $i$.
    Since the cover, $\cal{O}$, is finite and ${T^{(i)}\overset{\tinytoplocWOT}{\underset{i}{\longrightarrow}}T}$,
    the right hand expression clearly converges to $2\eps$.
    Since $\eps>0$ was arbitrarily chosen,
    it follows that the limit in
        \eqcref{eq:1:rem:direct-explanation-equality-of-topologies:sig:article-simpl-wk-problem-raj-dahya}
    holds.%
\end{rem}

Recalling that the $\topSOT$- and $\topWOT$-topologies also coincide on the space of isometries
(\cf \Cref{prop:space-of-contractions-Poln:sig:article-simpl-wk-problem-raj-dahya}),
and since the above argument only relied on the operator-valued functions being isometry-valued,
the claims in \Cref{prop:op-func-spaces-are-polish:basic-func:sig:article-simpl-wk-problem-raj-dahya}
remain true if we consider spaces of isometrie-valued functions instead of unitary-valued functions.

\subsection[Statement of the main result]{Statement of the main result}
\label{sec:intro:preview}

\firstparagraph
We may now apply the definitions of the previous section
to the subspaces of unitary and contractive semigroups.

\begin{prop}
\makelabel{prop:spHs-and-spUHs-polish:sig:article-simpl-wk-problem-raj-dahya}
    Let $(M,\cdot,1)$ be a locally compact Polish monoid
        (\exempli $(\realsNonNeg,+,0)$ or $(\ntrlzero,+,0)$).
    Then
        $(\SpHs(M),\toplocSOT)$ and $(\SpUHs(M),\toplocSOT)$
    are Polish spaces.
    Moreover, ${(\SpUHs(M),\toplocSOT)=(\SpUHw(M),\toplocWOT)}$.
\end{prop}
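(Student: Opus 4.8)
The plan is to realise both $\SpHs(M)$ and $\SpUHs(M)$ as \emph{closed} subspaces of the ambient function spaces $(\SpCs(M),\toplocSOT)$ and $(\SpUs(M),\toplocSOT)$, which are already known to be Polish by \Cref{prop:op-func-spaces-are-polish:basic-func:sig:article-simpl-wk-problem-raj-dahya} (applied with $\RaumX=M$, using that $M$ is locally compact Polish). Since a closed subspace of a Polish space is itself Polish, this settles the first two assertions simultaneously.

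To carry this out, recall from \Cref{rem:standard-funct-spaces:arbitrary-top-monoids:sig:article-simpl-wk-problem-raj-dahya} that $T\in\SpCs(M)$ is a semigroup precisely when $T(1)=\onematrix$ and $T(s\cdot t)=T(s)T(t)$ for all $s,t\in M$, so $\SpHs(M)$ is the intersection of the sets cut out by these conditions inside $\SpCs(M)$. First I would observe that for each fixed $x\in M$ the point evaluation $T\mapsto T(x)$ is $\toplocSOT$-to-$\topSOT$ continuous, since $\{x\}$ is compact and $\toplocSOT$-convergence forces uniform $\topSOT$-convergence on $\{x\}$; in particular $\{T\mid T(1)=\onematrix\}$ is closed. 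For the multiplicativity conditions, fix $s,t\in M$ and a convergent sequence $T^{(i)}\to T$ (sequences suffice, the spaces being metrisable), so that $T^{(i)}(s)\to T(s)$, $T^{(i)}(t)\to T(t)$, and $T^{(i)}(st)\to T(st)$ in $\topSOT$ by the previous observation.

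The one genuine point is the joint $\topSOT$-continuity of operator multiplication on the \emph{uniformly bounded} set of contractions, which is exactly where the contraction bound is used. For each $\xi\in\HilbertRaum$ I would split $T^{(i)}(s)T^{(i)}(t)\xi-T(s)T(t)\xi=T^{(i)}(s)\big(T^{(i)}(t)\xi-T(t)\xi\big)+\big(T^{(i)}(s)-T(s)\big)T(t)\xi$; the first summand has norm $\leq\norm{T^{(i)}(t)\xi-T(t)\xi}\to 0$ because $\norm{T^{(i)}(s)}\leq 1$, while the second tends to $0$ by the $\topSOT$-convergence $T^{(i)}(s)\to T(s)$ evaluated at the fixed vector $T(t)\xi$. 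Hence $T^{(i)}(s)T^{(i)}(t)\xi\to T(s)T(t)\xi$, and passing to the limit in $T^{(i)}(st)=T^{(i)}(s)T^{(i)}(t)$ yields $T(st)=T(s)T(t)$. Thus $\SpHs(M)$ is closed in $\SpCs(M)$, and the identical argument (unitaries being contractions, so the same norm bound applies) shows $\SpUHs(M)$ is closed in $\SpUs(M)$; both are therefore Polish.

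Finally, the coincidence $(\SpUHs(M),\toplocSOT)=(\SpUHw(M),\toplocWOT)$ needs no new work: by the last assertion of \Cref{prop:op-func-spaces-are-polish:basic-func:sig:article-simpl-wk-problem-raj-dahya} the ambient spaces $(\SpUs(M),\toplocSOT)$ and $(\SpUw(M),\toplocWOT)$ already agree as topological spaces (same underlying set, same topology), so the subsets carved out by the purely algebraic semigroup conditions agree as sets and inherit the same relative topology. I expect the joint-continuity step to be the only substantive obstacle, and it is resolved entirely by the uniform norm bound available for contractions, and hence \emph{a fortiori} for unitaries.
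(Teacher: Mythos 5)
Your proposal is correct and takes essentially the same route as the paper: both realise $\SpHs(M)$ and $\SpUHs(M)$ as closed subspaces of the Polish ambient spaces supplied by \Cref{prop:op-func-spaces-are-polish:basic-func:sig:article-simpl-wk-problem-raj-dahya} (via continuity of point evaluations and joint $\topSOT$-continuity of multiplication on norm-bounded sets), and both obtain the final claim from the topological identity $(\SpUs(M),\toplocSOT)=(\SpUw(M),\toplocWOT)$ combined with the purely algebraic nature of the semigroup conditions. The only cosmetic differences are that you work with sequences (legitimate, by metrisability) where the paper uses nets, and you spell out the splitting estimate for bounded multiplication that the paper merely cites as a known fact.
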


    \begin{proof}
        For any ${T:M\to\BoundedOps{\HilbertRaum}}$,
        let $\Phi(T)$ denote that $T$ has the semigroup property,
        \idest $T(1)=\onematrix$ and $T(s\cdot t)=T(s)T(t)$
        for all $s,t\in M$.
        By definition (see \Cref{rem:standard-funct-spaces:arbitrary-top-monoids:sig:article-simpl-wk-problem-raj-dahya})
        we have
            $\SpUHs(M)=\{T\in\SpUs(M)\mid\Phi(T)\}$
        and $\SpUHw(M)=\{T\in\SpUw(M)\mid\Phi(T)\}$.
        Thus, applying \Cref{prop:op-func-spaces-are-polish:basic-func:sig:article-simpl-wk-problem-raj-dahya}
        to these subspaces immediately yields the final claim.

        Since by
            \Cref{prop:op-func-spaces-are-polish:basic-func:sig:article-simpl-wk-problem-raj-dahya}
            $(\SpCs(M),\toplocSOT)$ and $(\SpUs(M),\toplocSOT)$ are Polish,
        and since
            ${\SpUHs(M)=\SpHs(M)\cap\SpUs(M)}$
        and
            $(\SpUs(M),\toplocSOT)$ can be viewed as a subspace of $(\SpCs(M),\toplocSOT)$,
        in order to prove the first claims,
        it clearly suffices to prove that
            $\SpHs(M)$ is closed within $(\SpCs(M),\toplocSOT)$.
        To this end, consider an arbitrary net, ${(T_{i})_{i}\subseteq\SpHs(M)}$,
        with ${T_{i}\overset{\tinytoplocSOT}{\longrightarrow}T}$ for some ${T\in\SpCs(M)}$.
        We need to show that $T\in\SpHs(M)$.
        Now, for each $t\in M$, since $\{t\}\subseteq M$ is compact,
        the definition of uniform convergence on compact subsets
        immediately yields that ${T_{i}(t)\overset{\tinytopSOT}{\longrightarrow}T(t)}$.
        Hence $T(1)=\lim_{i}T_{i}(1)=\onematrix$.
        Since operator-multiplication is $\topSOT$-continuous on norm-bounded subsets,
        and since $T_{i}$ and $T$ are uniformly bounded,
        it follows that
            ${T_{i}(s \cdot t)=T_{i}(s)T_{i}(t)\overset{\tinytopSOT}{\longrightarrow}T(s)T(t)}$
        and
            ${T_{i}(s \cdot t)\overset{\tinytopSOT}{\longrightarrow}T(s \cdot t)}$,
        and hence $T(s \cdot t)=T(s)T(t)$ for all $s,t\in M$.
        Thus $T\in\SpHs(M)$, which proves that $\SpHs(M)$ is a $\toplocSOT$-closed subset of $\SpCs(M)$.
    \end{proof}

\begin{rem}
\makelabel{rem:discete-time-is-pw:sig:article-simpl-wk-problem-raj-dahya}
    It is easy to see that the map ${T\mapsto (T^{n})_{n\in\ntrlzero}}$
    provides a topological isomorphism
    between
    the space, $\OpSpaceC{\HilbertRaum}$, of contractions under the $\topPW$-topology
    and
    the space, $\SpHs(\ntrlzero)$, of discrete-time contractive semigroups under the $\toplocWOT$-topology.
    Thus, studying the space of contractions under the $\topPW$-topology
    is equivalent to studying the space of discrete-time contractive semigroups.
\end{rem}

Now, what is clearly missing from \Cref{prop:spHs-and-spUHs-polish:sig:article-simpl-wk-problem-raj-dahya}
is a classification of the space
    $(\SpHs(M),\toplocWOT)$
for locally compact Polish monoids, $M$.
These topological spaces are the subject of the main result of this paper:

\begin{schattierteboxdunn}[backgroundcolor=leer,nobreak=true]
\begin{thm}
\makelabel{thm:main-result:sig:article-simpl-wk-problem-raj-dahya}
    Let $\HilbertRaum$ denote a separable, infinite dimensional Hilbert space.
    The spaces of continuous- and discrete-time contractive $\C0$-semigroups over $\HilbertRaum$
    are Baire spaces under the $\toplocWOT$-topology.
\end{thm}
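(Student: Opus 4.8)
The plan is to derive the Baire property from a general game-theoretic transfer principle, exploiting that the unitary semigroups sit inside the contractive ones as a \emph{dense} and \emph{completely metrisable} subspace. Fix $M\in\{\realsNonNeg,\ntrlzero\}$, write $X\colonequals(\SpHs(M),\toplocWOT)$ for the space in question, and let $D\colonequals\SpUHs(M)\subseteq\SpHs(M)$ carry the subspace topology, which is exactly the restriction of $\toplocWOT$. By \Cref{prop:spHs-and-spUHs-polish:sig:article-simpl-wk-problem-raj-dahya} the space $(\SpUHs(M),\toplocSOT)=(\SpUHw(M),\toplocWOT)$ is Polish, so $D$ is a nonempty Polish subspace of $X$. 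Recall the Choquet game on a nonempty space $Z$: players~I and~II alternately choose nonempty open sets $U_{0}\supseteq V_{0}\supseteq U_{1}\supseteq V_{1}\supseteq\cdots$ (player~I contributing the $U_{n}$ and player~II the $V_{n}$), and player~II wins the run iff $\bigcap_{n}U_{n}\neq\emptyset$. By Oxtoby's characterisation (\cf \cite{kech1994}), $Z$ is a Baire space iff player~I has no winning strategy, whereas $Z$ is Choquet iff player~II has one; in particular the Polish space $D$ is Choquet, so player~II has a winning strategy $\tau$ on $D$.

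The technical core is the transfer lemma: \emph{if a nonempty space $X$ possesses a dense Choquet subspace $D$, then $X$ is a Baire space}. To prove it I would fix an arbitrary strategy $\sigma$ for player~I in the Choquet game on $X$ and exhibit a defeating play for player~II, run in lockstep with a shadow play on $D$ governed by $\tau$. In round $n$, once $\sigma$ has produced a nonempty open $U_{n}\subseteq X$, density makes $U_{n}\cap D$ a nonempty open subset of $D$; treating this as player~I's move in the shadow game, the strategy $\tau$ returns a nonempty open $V_{n}'\subseteq U_{n}\cap D$, which lifts back to a nonempty open $V_{n}\subseteq U_{n}$ in $X$ with $V_{n}\cap D=V_{n}'$ (concretely, write $V_{n}'=O\cap D$ for some open $O\subseteq X$ and set $V_{n}\colonequals O\cap U_{n}$); player~II then plays $V_{n}$. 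Because $\tau$ is winning, the shadow run satisfies $\bigcap_{n}(U_{n}\cap D)\neq\emptyset$, and any witness already lies in $\bigcap_{n}U_{n}$; hence player~II wins the run on $X$, so $\sigma$ is not winning. As $\sigma$ was arbitrary, player~I has no winning strategy, and $X$ is a Baire space.

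It remains to verify that $D$ is dense in $X$, \idest that the unitary $\C0$-semigroups are $\toplocWOT$-dense among the contractive ones. For $M=\realsNonNeg$ this is precisely Król's density theorem \cite[Corollary~3.2]{krol2009}; for $M=\ntrlzero$ the topological isomorphism of \Cref{rem:discete-time-is-pw:sig:article-simpl-wk-problem-raj-dahya} identifies the claim with the $\topPW$-density of the unitaries in the contractions, which is known (\cf \cite{eisnermaitrai2010typicalOperators}). Feeding either density statement into the transfer lemma settles both the continuous- and discrete-time cases at once, which proves the theorem.

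I expect the main obstacle to be twofold. On the structural side it is isolating the correct infinite-game characterisation so that the strategy-copying survives for an arbitrary, possibly non-metrisable target $X$ while demanding only that the dense witness $D$ be Choquet (Polishness of $D$, which gives even strong Choquet, is more than enough here). On the analytic side the genuine content is the density input: Król's approximation of a contraction semigroup by unitary ones is dilation-theoretic, and the delicate point is that the approximation must hold for the \emph{uniform-on-compacta} weak topology $\toplocWOT$ rather than merely pointwise in $\topWOT$.
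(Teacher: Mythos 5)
Your proposal is correct and takes essentially the same approach as the paper: Polishness of the unitary subspace (\Cref{prop:spHs-and-spUHs-polish:sig:article-simpl-wk-problem-raj-dahya}), density via Kr\'ol/Peller, and a strategy-lifting argument whose key step (setting $V_{n}\colonequals O\cap U_{n}$ where $V_{n}'=O\cap D$) is exactly the construction in the paper's proof of \Cref{lemm:backwardsinherit:choquet:sig:article-simpl-wk-problem-raj-dahya}. The only difference is cosmetic: you defeat an arbitrary strategy of Player~I and invoke Oxtoby's theorem to conclude that the space is Baire, whereas the paper packages the identical construction as a winning strategy for Player~II, thereby concluding that the contractive-semigroup space is moreover Choquet.
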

\end{schattierteboxdunn}

By \Cref{rem:discete-time-is-pw:sig:article-simpl-wk-problem-raj-dahya},
the result in the discrete case yields that the space of contractions under the $\topPW$-topology is a Baire space
(\cf \cite[Theorem~4.1]{eisnermaitrai2010typicalOperators}).

The claim in \Cref{thm:main-result:sig:article-simpl-wk-problem-raj-dahya} is that
    $(\SpHs(M),\toplocWOT)$ is a Baire space for $M\in\{\realsNonNeg,\ntrlzero\}$.
Working in this uniform context, our recipe for proving this is as follows:
(1) Classical \kurs{dilation} theorems allow us to approximate contractive semigroups via unitary semigroups,
thus yielding the \kurs{density} of $\SpUHs(M)$ within $(\SpHs(M),\toplocWOT)$;
(2) We develop a simple method to transfer certain properties,
including the property of being a Baire space,
from dense subspaces to larger spaces.
We utilise tools from descriptive set theory to classify topological properties via infinite games,
and prove that a slightly stronger condition than being a Baire space is also transferable.

\section[Density results]{Density results}
\label{sec:density-results}

\firstparagraph
We shall appeal to the following density results as a crucial step in solving the main problem.

\begin{lemm*}[Peller, 1981]
    The subset $\OpSpaceU{\HilbertRaum}$ is dense in $(\OpSpaceC{\HilbertRaum},\topPW)$.
\end{lemm*}

\begin{lemm*}[Król, 2009]
    Under the $\toplocWOT$-topology,
    all contractive $\C0$-semigroups over $\HilbertRaum$
    can be approximated
    via unitary $\C0$-semigroups over $\HilbertRaum$.
\end{lemm*}

See
    \cite[Theorem~1]{peller1981}
    and
    \cite[Theorem~2.1]{krol2009}
for proofs of these respective results.

\begin{rem}
    It turns out that in both of the above proofs,
    a key ingredient is the operator theoretic concept of \highlightTerm{dilation}.
    So, for the purposes of generalisation (\exempli to multiparameter semigroups),
    the approach in this paper might essentially depend on the existence of dilations.
    Whilst there exist dilation results for semigroups over $\realsNonNeg^{2}$
        (\cf
            \cite{slocinski1974}, 
            \cite[Theorem~2]{slocinski1982},
            and
            \cite[Theorem~2.3]{ptak1985}%
        ),
    for $\realsNonNeg^{d}$ with $d>2$,
    the existence of dilations is limited to certain semigroups
    (see \exempli \cite[Theorem~3.2]{ptak1985}).
\end{rem}

Placed in terms of the framework in this paper and noting \Cref{rem:discete-time-is-pw:sig:article-simpl-wk-problem-raj-dahya},
the above lemmata can be more uniformly summarised as follows:

\begin{lemm}
\makelabel{lemm:density-results-summarised:sec:density-results:sig:article-simpl-wk-problem-raj-dahya}
    The subspace, $\SpUHs(M)$, is dense in $(\SpHs(M),\toplocWOT)$ for $M\in\{\realsNonNeg,\ntrlzero\}$.
\end{lemm}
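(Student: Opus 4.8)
The plan is to recognise that this lemma is a repackaging of the two cited density results---Peller's in the discrete setting and Kr\'ol's in the continuous setting---into the uniform framework of \Cref{sec:intro:spaces}, and so to treat the two monoids $M=\realsNonNeg$ and $M=\ntrlzero$ separately. In each case the substantive analysis is already carried out in the cited theorem, and what remains is to verify that the spaces and topologies named there coincide with $\SpUHs(M)$, $\SpHs(M)$, and the $\toplocWOT$-topology.

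For the continuous case $M=\realsNonNeg$, I would first unwind the definitions: $\SpHs(\realsNonNeg)$ is by definition the space of $\topSOT$-continuous contractive semigroups over $\HilbertRaum$, which is exactly the class of contractive $\C0$-semigroups, and $\SpUHs(\realsNonNeg)$ is exactly the class of unitary $\C0$-semigroups. Kr\'ol's lemma states precisely that, under the $\toplocWOT$-topology, every contractive $\C0$-semigroup is a limit of unitary $\C0$-semigroups; since the approximating semigroups are unitary and strongly continuous they lie in $\SpUHs(\realsNonNeg)$, and the asserted convergence is $\toplocWOT$-convergence. Hence this is verbatim the density of $\SpUHs(\realsNonNeg)$ in $(\SpHs(\realsNonNeg),\toplocWOT)$, and nothing further is needed.

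For the discrete case $M=\ntrlzero$, I would transport Peller's result along the topological isomorphism of \Cref{rem:discete-time-is-pw:sig:article-simpl-wk-problem-raj-dahya}, which identifies $(\OpSpaceC{\HilbertRaum},\topPW)$ with $(\SpHs(\ntrlzero),\toplocWOT)$ via $S\mapsto(S^{n})_{n\in\ntrlzero}$. A contraction $S$ is unitary if and only if the associated semigroup $(S^{n})_{n}$ is unitary-valued (powers of a unitary are unitary, and conversely $S=S^{1}$), so this isomorphism carries $\OpSpaceU{\HilbertRaum}$ bijectively onto $\SpUHs(\ntrlzero)$. As a homeomorphism sends dense sets to dense sets, Peller's statement that $\OpSpaceU{\HilbertRaum}$ is dense in $(\OpSpaceC{\HilbertRaum},\topPW)$ then immediately gives that $\SpUHs(\ntrlzero)$ is dense in $(\SpHs(\ntrlzero),\toplocWOT)$.

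The only point demanding attention---the ``obstacle,'' insofar as there is one---is the correspondence of topologies underlying \Cref{rem:discete-time-is-pw:sig:article-simpl-wk-problem-raj-dahya}. I would justify it by noting that the compact subsets of the discrete space $\ntrlzero$ are precisely its finite subsets, so that $\toplocWOT$-convergence of $(S_{i}^{n})_{n}$ to $(S^{n})_{n}$ collapses to the condition $S_{i}^{n}\overset{\tinytopWOT}{\longrightarrow}S^{n}$ for each fixed $n\in\ntrlzero$, which is exactly the defining convergence of the $\topPW$-topology. With this identification, both cases reduce to the two named lemmata, so there is no genuinely new argument to supply beyond the definitional bookkeeping; all of the analytic depth resides inside Peller's and Kr\'ol's theorems, both of which rest on dilation.
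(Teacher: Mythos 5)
Your proposal is correct and is essentially the paper's own reading of this lemma: the paper offers no separate proof, but presents it precisely as a uniform repackaging of Kr\'ol's theorem (the continuous case, verbatim after unwinding the definitions) and Peller's theorem (the discrete case, transported through the identification of $(\OpSpaceC{\HilbertRaum},\topPW)$ with $(\SpHs(\ntrlzero),\toplocWOT)$ in \Cref{rem:discete-time-is-pw:sig:article-simpl-wk-problem-raj-dahya}, with continuity automatic over the discrete monoid). Your additional verifications---that compact subsets of $\ntrlzero$ are finite so $\toplocWOT$-convergence collapses to the $\topPW$-condition, and that the isomorphism carries $\OpSpaceU{\HilbertRaum}$ onto $\SpUHs(\ntrlzero)$---are exactly the bookkeeping the paper leaves implicit.
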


Note in the discrete case, that since all functions over $\ntrlzero$ are continuous,
the set of strongly continuous unitary semigroups over $\HilbertRaum$
is simply equal to the set of unitary semigroups over $\HilbertRaum$.

\section[Inheritance of properties from dense subspaces]{Inheritance of properties from dense subspaces}
\label{sec:intro}

\firstparagraph
In light of the above results, in order to solve the main problem,
it suffices to develop abstract methods to transfer properties from dense subsets to larger spaces.
We shall achieve this for the property of being a Baire space, as well as for the stronger condition of being a \kurs{Choquet} space.

\subsection[Definition of Baire and Choquet spaces via infinite games]{Definition of Baire and Choquet spaces via infinite games}
\label{sec:intro}

    \firstparagraph
    The following definitions and results can be found, for example, in
    \cite[\S{}8C--8E]{kech1994}.

    \begin{defn}[Choquet Game]
        For a topological space, $\RaumX$, the \highlightTerm{Choquet game},
        $\cal{G}_{\RaumX}$,
        is defined by two players, I and II, alternating
        and choosing non-empty open sets as follows:

            \begin{mathe}[mc]{rccccccl}
                \textup{I} &: &U_{0} &&U_{1} &&U_{2} &\cdots\\
                \textup{II} &: &&V_{0} &&V_{1} &&\cdots\\
            \end{mathe}

        \continueparagraph
        Such a \highlightTerm{run} of the game is \highlightTerm{valid},
        if and only if

            \begin{mathe}[mc]{rcccccccl}
                U_{0} &\supseteq &V_{0} &\supseteq &U_{1} &\supseteq &V_{1} &\supseteq &\ldots\\
            \end{mathe}

        \continueparagraph
        is satisfied. Player I wins such a run, exactly in case
            $\bigcap_{n\in\ntrlzero}U_{n}(=\bigcap_{n\in\ntrlzero}V_{n})=\leer$.
        Otherwise Player II wins.
        The \highlightTerm{strong Choquet game},
        $\cal{G}^{s}_{\RaumX}$,
        is defined similarly, except that on their $n$-th moves, $n\in\ntrlzero$,
        Player I additionally chooses some element $x_{n}\in U_{n}$
        and Player II must ensure that $x_{n}\in V_{n}$ holds.
    \end{defn}

    Loosely speaking, in these games, Player II attempts to construct an element or a subclass of elements,
    whilst Player I tries to frustrate his efforts by continually demanding that
    the element or subclass of elements realise ever more properties.
    Concretely, these games allow us to characterise topological concepts.
    By \cite[Theorem~8.11]{kech1994},
    we have:

    \begin{thm*}[Oxtoby]
        A topological space, $\RaumX$, is a Baire space if and only if Player I has no winning strategy in the game, $\cal{G}_{\RaumX}$.
    \end{thm*}

    Since the games are zero-sum,
    if Player II has a winning strategy in the Choquet game
    then Player I does not have one.
    This leads to the following strengthening of the concept of being a Baire space.

    \begin{defn}
        A topological space $\RaumX$ is called a \highlightTerm{(strong) Choquet space}
        if Player II has a winning strategy in the game, $\cal{G}_{\RaumX}$ (respectively in $\cal{G}^{s}_{\RaumX}$).
    \end{defn}

    It is easy to see, for a topological space, $\RaumX$, that a winning strategy for Player II in $\cal{G}^{s}_{\RaumX}$
    can be transferred to a winning strategy for Player II in $\cal{G}_{\RaumX}$
    (and excludes Player I from having a winning strategy in $\cal{G}_{\RaumX}$).
    Hence the following relations hold:

        \begin{mathe}[mc]{rcccl}
            \text{Strong Choquet} &\Longrightarrow &\text{Choquet} &\Longrightarrow &\text{Baire}.
        \end{mathe}

    \continueparagraph
    These implications are strict
    (see \cite[Exercises~8.13 and 8.15]{kech1994}).
    Being Choquet or strong Choquet characterises further useful topological properties
    (see
        \cite[Theorem~8.17]{kech1994}
    for a proof):

    \begin{thm*}[Choquet]
        Let $\RaumX$ be a separable metrisable space and,
        fixing a compatible metric $d$ on $\RaumX$,
        let $\RaumY$ be its unique metric completion.
        Then $\RaumX$ is comeagre in $\RaumY$ if and only if $\RaumX$ is Choquet.
        Furthermore, $\RaumX$ is Polish if and only if $\RaumX$ is strong Choquet.
    \end{thm*}

\subsection[The method of backwards inheritance]{The method of backwards inheritance}
\label{sec:intro}

    \firstparagraph
    We now develop a method to transfer properties from dense subspaces to their overlying spaces.

    \begin{defn}[Backwards inheritance]
        Call a property, $\Phi$, for topological subspaces
        \highlightTerm{backwards inheritable} exactly in case
        for all topological spaces $(\RaumX,\tau)$,
        if a dense subset $D\subseteq\RaumX$ exists,
        such that $(D,\tau\restr{D})$ satisfies $\Phi$,
        then $(\RaumX,\tau)$ satisfies $\Phi$.
    \end{defn}

    \begin{e.g.}
        Let $(\RaumX,\tau)$ be a topological space.
        Suppose a dense subset $D\subseteq\RaumX$ exists, such that $(D,\tau\restr{D})$ is separable.
        So \wrt $(D,\tau\restr{D})$ there exists a countable dense subset $D'\subseteq D$.
        Since density is a transitive property,\footnote{
            Let $U\subseteq\RaumX$ be non-empty and open.
            Then, by density, $U\cap D$ is non-empty and also open in $(D,\tau\restr{D})$.
            By density of $D'$ in $(D,\tau\restr{D})$,
            it follows that $U\cap D' =(U\cap D)\cap D'\neq\leer$.
        }
        $D'$ is dense in $\RaumX$.
        Hence $\RaumX$ is separable.
        It follows that separability is backwards inheritable.
    \end{e.g.}

    It shall now be shown that the property
        of being a Baire space
        as well as
        that of being a Choquet space,
    are backwards inheritable.

    \begin{schattierteboxdunn}[backgroundcolor=leer,nobreak=true]
    \begin{lemm}
        \makelabel{lemm:backwardsinherit:baire:sig:article-simpl-wk-problem-raj-dahya}
        The property of being a Baire space is backwards inheritable.
    \end{lemm}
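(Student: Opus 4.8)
The plan is to route the entire argument through Oxtoby's theorem, which turns the topological statement into a statement about the Choquet game. So let $(\RaumX,\tau)$ be a topological space possessing a dense subspace $D\subseteq\RaumX$ for which $(D,\tau\restr{D})$ is a Baire space; by Oxtoby's theorem it suffices to show that Player~I has no winning strategy in $\cal{G}_{\RaumX}$. I would argue by contraposition: assuming that Player~I \emph{does} have a winning strategy $\sigma$ in $\cal{G}_{\RaumX}$, I will manufacture from it a winning strategy $\sigma'$ for Player~I in $\cal{G}_{D}$, which by Oxtoby's theorem again contradicts the assumption that $(D,\tau\restr{D})$ is Baire.

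The heart of the proof is this transfer of strategies, and it rests on a canonical way of lifting relatively open sets. For each open $W\subseteq D$ I would set $e(W)\colonequals\bigcup\{O\subseteq\RaumX\text{ open}\mid O\cap D\subseteq W\}$, the largest open subset of $\RaumX$ whose trace on $D$ is contained in $W$; since $D$ carries the subspace topology one checks $e(W)\cap D=W$. The strategy $\sigma'$ is then built turn by turn. Player~I opens in $\cal{G}_{D}$ with $U_{0}\cap D$, where $U_{0}$ is $\sigma$'s opening move; when Player~II answers with a non-empty relatively open $V_{n}'$, I lift it to $V_{n}\colonequals e(V_{n}')\cap U_{n}$, which satisfies $V_{n}\subseteq U_{n}$ and $V_{n}\cap D=V_{n}'$, feed $V_{n}$ to $\sigma$ as Player~II's move in $\cal{G}_{\RaumX}$, and let Player~I reply in $\cal{G}_{D}$ with $U_{n+1}\cap D$, where $U_{n+1}$ is $\sigma$'s response. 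Because $\sigma$ returns legal moves, $U_{n+1}\subseteq V_{n}$, so $U_{n+1}\cap D\subseteq V_{n}'$ and $\sigma'$ is itself a legitimate strategy, depending only on the visible history of $\cal{G}_{D}$.

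The remaining verification is bookkeeping that I would not grind through in detail. Any run of $\cal{G}_{D}$ conforming to $\sigma'$ induces, by construction, a valid run $U_{0}\supseteq V_{0}\supseteq U_{1}\supseteq V_{1}\supseteq\cdots$ of $\cal{G}_{\RaumX}$ conforming to $\sigma$; as $\sigma$ is winning for Player~I, $\bigcap_{n}U_{n}=\leer$. The parallel run of $\cal{G}_{D}$ consists of the traces $U_{n}\cap D$, and the identity $\bigcap_{n}(U_{n}\cap D)=(\bigcap_{n}U_{n})\cap D=\leer$ shows that Player~I wins that run as well. Since the run was arbitrary, $\sigma'$ is winning for Player~I in $\cal{G}_{D}$, the contradiction sought.

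The one genuinely delicate point, and where I expect the main obstacle to lie, is upgrading the move-by-move lifting into a \emph{bona fide} strategy: the lift must depend only on the history already played and must preserve the nesting $V_{n}\subseteq U_{n}$. The canonical operator $e(\cdot)$, intersected with the current $U_{n}$, handles both requirements simultaneously. Here density enters in exactly one place, namely to guarantee that the traces $U_{n}\cap D$ of Player~I's moves stay non-empty, so that the manufactured run of $\cal{G}_{D}$ is legal; the non-emptiness of the lifts $V_{n}$ comes for free from $V_{n}\cap D=V_{n}'\neq\leer$. Once these points are settled, the validity of both runs and the intersection identity are routine.
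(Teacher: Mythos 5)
Your proof is correct, but it takes a genuinely different route from the paper's. The paper proves this lemma directly from the dense-open-set definition of Baire spaces: if $U_{n}\subseteq\RaumX$ are dense open, then the traces $U_{n}\cap D$ are dense open in $(D,\tau\restr{D})$, so their intersection is dense in $D$ by the Baire property of $D$, and transitivity of density pushes this back up to $\RaumX$ --- no games appear at all. You instead invoke Oxtoby's theorem in both directions and transfer a hypothetical winning strategy for Player I in $\cal{G}_{\RaumX}$ down to one in $\cal{G}_{D}$ via the canonical lifting $e(W)=\bigcup\{O\subseteq\RaumX\text{ open}\mid O\cap D\subseteq W\}$. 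Your verification is sound: $V_{n}=e(V_{n}')\cap U_{n}$ is open, contained in $U_{n}$, has trace exactly $V_{n}'$ (this uses $V_{n}'\subseteq U_{n}\cap D$, which holds by validity of the run in $\cal{G}_{D}$), hence is non-empty; density of $D$ enters precisely to keep Player I's projected moves $U_{n}\cap D$ non-empty; and the winning condition transfers because $\bigcap_{n}(U_{n}\cap D)\subseteq\bigcap_{n}U_{n}=\leer$. What is notable is that your argument is a mirror image of the paper's proof of the \emph{companion} lemma on backwards inheritance of the Choquet property: there the paper transfers Player II's winning strategy upward from $\cal{G}_{D}$ to $\cal{G}_{\RaumX}$, whereas you transfer Player I's strategy downward from $\cal{G}_{\RaumX}$ to $\cal{G}_{D}$. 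The paper's direct proof buys elementarity and brevity, needing no game-theoretic machinery and not even the full strength of Oxtoby's characterisation; your proof buys uniformity, since one lifting technique handles both lemmas, and your canonical operator $e(\cdot)$ even tidies up a small non-canonical choice in the paper's Choquet argument, where an open $W$ with $W\cap D=\tilde{V}_{n}$ is picked without a distinguished rule.
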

    \end{schattierteboxdunn}

        \begin{proof}
            Let $(\RaumX,\tau)$ be a topological space and let ${D\subseteq\RaumX}$ be dense.
            Suppose that $(D,\tau\restr{D})$ is a Baire space.
            Let $U_{n}\subseteq\RaumX$, $n\in\ntrl$
            be an arbitrary countable sequence of dense open subsets.
            It suffices to show that that $\bigcap_{n\in\ntrl}U_{n}$ is dense in $\RaumX$
            (\cf \cite[Definition~8.2]{kech1994}).

            First note, that if $U\subseteq\RaumX$ is a dense open subset,
            then clearly $U\cap D$ is also dense and open within $(D,\tau\restr{D})$.
            Thus, $U_{n}\cap D$ is a dense open subset in $(D,\tau\restr{D})$ for all $n\in\ntrl$.
            Since $(D,\tau\restr{D})$ is a Baire space,
            it follows that
            $C \colonequals (\bigcap_{n\in\ntrl}U_{n})\cap D=\bigcap_{n\in\ntrl}U_{n}\cap D$ is dense in $D$.
            Since density is transitive,
            it follows that $C$ and thereby $\bigcap_{n\in\ntrl}U_{n}$
            are dense in $(\RaumX,\tau)$.
            Thus $\RaumX$ is a Baire space.
        \end{proof}

    \begin{schattierteboxdunn}[backgroundcolor=leer,nobreak=true]
    \begin{lemm}
        \makelabel{lemm:backwardsinherit:choquet:sig:article-simpl-wk-problem-raj-dahya}
        Being a Choquet space is backwards inheritable.
    \end{lemm}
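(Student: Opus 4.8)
The plan is to lift a winning strategy at the level of the Choquet \emph{game}, mirroring the proof of \Cref{lemm:backwardsinherit:baire:sig:article-simpl-wk-problem-raj-dahya} but one level up. So let $(\RaumX,\tau)$ be a topological space with a dense subset $D\subseteq\RaumX$ such that $(D,\tau\restr{D})$ is Choquet, and fix a winning strategy $\sigma$ for Player II in $\cal{G}_{D}$. I will use $\sigma$ to build a winning strategy for Player II in $\cal{G}_{\RaumX}$. The key device is the correspondence between non-empty open subsets of $\RaumX$ and of $D$ furnished by density: if $U\subseteq\RaumX$ is open and non-empty then $U\cap D$ is open and non-empty in $D$, and conversely every non-empty open subset of $D$ has the form $W\cap D$ for some open $W\subseteq\RaumX$.

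During a run of $\cal{G}_{\RaumX}$, Player II maintains in parallel a shadow run of $\cal{G}_{D}$ played according to $\sigma$. Whenever Player I plays an open set $U_{n}\subseteq\RaumX$ (with $U_{n}\subseteq V_{n-1}$ for $n\geq1$), Player II treats $U_{n}'\colonequals U_{n}\cap D$ as Player I's move in the shadow game; this is a legal move because $U_{n}'$ is non-empty and open in $D$, and $U_{n}'\subseteq V_{n-1}'$ by the inductive bookkeeping hypothesis $V_{n-1}'=V_{n-1}\cap D$. The strategy $\sigma$ then returns a non-empty open $V_{n}'\subseteq U_{n}'$ in $D$. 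Choosing an open $W_{n}\subseteq\RaumX$ with $W_{n}\cap D=V_{n}'$, Player II plays $V_{n}\colonequals W_{n}\cap U_{n}$ in $\cal{G}_{\RaumX}$. Then $V_{n}\subseteq U_{n}$, and since $V_{n}'\subseteq U_{n}'\subseteq U_{n}$ one checks $V_{n}\cap D=V_{n}'\cap U_{n}=V_{n}'$, so in particular $V_{n}\neq\leer$; this both legalises II's move and re-establishes the bookkeeping hypothesis for the next round.

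At the conclusion of the run, $U_{0}\supseteq V_{0}\supseteq U_{1}\supseteq\cdots$ is a valid run of $\cal{G}_{\RaumX}$ whose trace on $D$, namely $U_{0}'\supseteq V_{0}'\supseteq U_{1}'\supseteq\cdots$, is a valid run of $\cal{G}_{D}$ in which Player II has followed $\sigma$. As $\sigma$ is winning, $\bigcap_{n\in\ntrlzero}V_{n}'\neq\leer$. Since $\bigcap_{n}V_{n}'=\bigcap_{n}(V_{n}\cap D)=(\bigcap_{n}V_{n})\cap D\subseteq\bigcap_{n}V_{n}$, we get $\bigcap_{n}V_{n}\neq\leer$, i.e.\ Player II wins this run of $\cal{G}_{\RaumX}$. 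Hence the described strategy is winning and $(\RaumX,\tau)$ is Choquet.

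I expect no serious analytic difficulty; the only point needing care is the bookkeeping that keeps the two games synchronised, ensuring each lifted move $V_{n}$ simultaneously lies below $U_{n}$ and restricts to the prescribed $V_{n}'$ on $D$, and that density keeps all of Player I's shadow moves non-empty and hence legal. It is worth flagging that this argument is special to the (non-strong) Choquet game: in $\cal{G}^{s}_{\RaumX}$ Player I additionally plays points $x_{n}\in U_{n}$, and such a point need not lie in $D$, so there is no canonical shadow move in $\cal{G}^{s}_{D}$. This is consistent with strong Choquetness (equivalently, Polishness for separable metrisable spaces) failing to be backwards inheritable in general, so only the Choquet statement is claimed.
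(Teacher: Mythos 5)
Your proof is correct and takes essentially the same approach as the paper's: intersect Player I's moves with $D$ to drive a shadow run of $\cal{G}_{D}$ under the given winning strategy, lift Player II's shadow replies back to $\RaumX$ via open sets $W_{n}$ satisfying $W_{n}\cap D=V_{n}'$, and play $V_{n}=W_{n}\cap U_{n}$. The only differences are cosmetic: you spell out the winning-condition check ($\bigcap_{n}V_{n}\supseteq(\bigcap_{n}V_{n})\cap D=\bigcap_{n}V_{n}'\neq\leer$) that the paper leaves as routine, and your closing observation on why the argument fails for the strong Choquet game parallels the paper's own remark that strong Choquetness is not backwards inheritable.
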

    \end{schattierteboxdunn}

        \begin{proof}
            Let $(\RaumX,\tau)$ be a topological space and let ${D\subseteq\RaumX}$ be dense.
            Suppose that $(D,\tau\restr{D})$ is a Choquet space.
            Then Player II has a \highlightTerm{winning strategy}, $\tilde{\sigma}$, in the game $\cal{G}_{D}$.
            We now construct from $\tilde{\sigma}$ a strategy, $\sigma$, for Player II in the game, $\cal{G}_{\RaumX}$.
            In an arbitrary play of the game $\cal{G}_{\RaumX}$,
            for each $n\in\ntrlzero$ and each valid sequence

                \begin{mathe}[mc]{rcccccccccl}
                \eqtag[eq:1:\beweislabel]
                    U_{0} &\supseteq &V_{0}
                        &\supseteq &U_{1} &\supseteq &V_{1}
                        &\supseteq &\ldots
                        &\supseteq &U_{n}\\
                \end{mathe}

            \continueparagraph
            of choices up to Player I's $n$th move,
            define Player II's $n$th move under the strategy
            as $\sigma(\brkt{U_{0},V_{0},U_{1},V_{1},\ldots,U_{n}}) \colonequals V_{n}$,
            where $V_{n}$ is chosen as follows:

                \begin{kompaktenum}{\bfseries\scshape {Step} 1.}
                    \item
                        Let $\tilde{U}_{i} \colonequals U_{i}\cap D$
                        for all ${i\in\{0,1,\ldots,n\}}$
                        and $\tilde{V}_{i} \colonequals V_{i}\cap D$
                        for all ${i\in\{0,1,\ldots,n-1\}}$.
                        By density of $D$, these are non-empty open subsets in $(D,\tau\restr{D})$,
                        and \eqcref{eq:1:\beweislabel}
                        implies

                            \begin{mathe}[mc]{rcccccccccl}
                                \tilde{U}_{0} &\supseteq &\tilde{V}_{0}
                                    &\supseteq &\tilde{U}_{1} &\supseteq &\tilde{V}_{1}
                                    &\supseteq &\ldots &\supseteq &\tilde{U}_{n}.\\
                            \end{mathe}
                    \item
                        Since the sequence
                            $s \colonequals \brkt{\tilde{U}_{0},\tilde{V}_{0},\tilde{U}_{1},\tilde{V}_{1},\ldots,\tilde{U}_{n}}$
                        constitutes a valid sequence of previous plays of non-empty open
                        sets in $D$ for the game $\cal{G}_{D}$
                        before Player II's $n$th move in that game,
                        we may set
                        $\tilde{V}_{n} \colonequals \tilde{\sigma}(s)$,
                        \idest{} $\tilde{V}_{n}$ is Player II's next move
                        in $\cal{G}_{D}$ based on the strategy $\tilde{\sigma}$.
                        In particular, $\tilde{V}_{n}$ is a non-empty, open set in $(D,\tau\restr{D})$
                        and $\tilde{V}_{n}\subseteq\tilde{U}_{n}$,
                        by validity of the strategy $\tilde{\sigma}$.
                    \item
                        Since $\tilde{V}_{n}$ is open in $D$, an open set $W\subseteq\RaumX$ exists, such that

                            \begin{mathe}[mc]{c}
                                \eqtag[eq:2:\beweislabel]
                                W\cap D = \tilde{V}_{n}\\
                            \end{mathe}

                        \continueparagraph
                        Set $V_{n} \colonequals W\cap U_{n}$, which again is open.
                        Since $U_{n}\supseteq\tilde{U}_{n}\supseteq\tilde{V}_{n}$,
                        by \eqcref{eq:2:\beweislabel} we have $V_{n}\supseteq\tilde{V}_{n}$.
                        Thus $\tilde{V}_{n}\subseteq V_{n}\cap D\subseteq W\cap D=\tilde{V}_{n}$,
                        so

                            \begin{mathe}[mc]{rclcl}
                                V_{n}\cap D &= &\tilde{V}_{n}
                                    &= &\tilde{\sigma}(\brkt{\tilde{U}_{0},\tilde{V}_{0},\tilde{U}_{1},\tilde{V}_{1},\ldots,\tilde{U}_{n}})\\
                            \end{mathe}

                        \continueparagraph
                        holds.
                        In particular, $V_{n}$ is a non-empty, open subset, contained in $U_{n}$,
                        and thus a valid $n$th move for Player II
                        in the game $\cal{G}_{\RaumX}$
                        based on the sequence in \eqcref{eq:1:\beweislabel}.
                \end{kompaktenum}

            It is now routine to check that $\sigma$ comprises a winning strategy for Player II in $\cal{G}_{\RaumX}$.
            By definition, this means that $\RaumX$ is Choquet.
        \end{proof}

    \begin{rem}
        \makelabel{rem:str-choq-not-backwardsinherit:sig:article-simpl-wk-problem-raj-dahya}
        Being strong Choquet ($\equiv$ Polish, by Choquet's theorem)
        is not backwards inheritable:
        Consider, \exempli, $\RaumX \colonequals D\cup(\{1\}\times\rtnl)$,
        where $D \colonequals [0,1)\times\reals$.
        Clearly, $D$ is dense in $\RaumX$ and Polish.
        If $\RaumX$ were Polish,
            then since $\{1\}\times\rtnl=\RaumX\cap(\{1\}\times\reals)$
            is a $G_{\delta}$-subset of $\RaumX$,
        it too would be Polish (%
            \cf
            \cite[Theorem~3.11]{kech1994}%
        ).
        Yet $\rtnl\cong\{1\}\times\rtnl$ is not even a Baire space.
    \end{rem}

\section[Proof of main result]{Proof of main result}
\label{sec:results}

\noindent
We may now prove \Cref{thm:main-result:sig:article-simpl-wk-problem-raj-dahya}.

\begin{proof}[of \Cref{thm:main-result:sig:article-simpl-wk-problem-raj-dahya}]
    By our setup (\cf
        \Cref{defn:standard-funct-spaces:sig:article-simpl-wk-problem-raj-dahya}
        and
        \Cref{rem:standard-funct-spaces:arbitrary-top-monoids:sig:article-simpl-wk-problem-raj-dahya}%
    ),
    we need to show that $(\SpHs(M),\toplocWOT)$ is a Baire space for $M\in\{\realsNonNeg,\ntrlzero\}$.
    Consider now the subspace, $\SpUHs(M)$, of $\topSOT$-continuous unitary semigroups.
    By \Cref{prop:spHs-and-spUHs-polish:sig:article-simpl-wk-problem-raj-dahya}
    the $\toplocWOT$- and $\toplocSOT$-topologies coincide on $\SpUHs(M)$
    and make this a Polish space.
    This immediately implies that $(\SpUHs(M),\toplocWOT)$ is a Choquet space and a Baire space.
    As per \Cref{lemm:density-results-summarised:sec:density-results:sig:article-simpl-wk-problem-raj-dahya},
        $\SpUHs(M)$ is dense in $(\SpHs(M),\toplocWOT)$.
    By backwards inheritance (\Cref{lemm:backwardsinherit:baire:sig:article-simpl-wk-problem-raj-dahya,lemm:backwardsinherit:choquet:sig:article-simpl-wk-problem-raj-dahya}),
    it follows that the larger space, $(\SpHs(M),\toplocWOT)$, is also a Choquet space and a Baire space.
\end{proof}

\section[Applications]{Applications}
\label{sec:applications}

\firstparagraph
We now focus squarely on the continuous case
and provide an application to rigidity phenomena.
From \Cref{thm:main-result:sig:article-simpl-wk-problem-raj-dahya},
we immediately obtain the following result:

\begin{prop}
\makelabel{prop:residual-unitary-iff-contractive:sig:article-simpl-wk-problem-raj-dahya}
    Let $\HilbertRaum$ be a separable infinite dimensional space.
    Consider the spaces of unitary and contractive $\C0$-semigroups over $\HilbertRaum$
    (which we denote $\SpUHs(\realsNonNeg)$ \respectively $\SpHs(\realsNonNeg)$),
    endowed with the topologies
    of uniform $\topWOT$-convergence on compact subsets of $\realsNonNeg$ (the $\toplocWOT$-topology),
    and of uniform $\topSOT$-convergence on compact subsets of $\realsNonNeg$ (the $\toplocSOT$-topology).
    Then the following

        \begin{kompaktenum}{\bfseries (i)}[\rtab]
            \item\punktlabel{1}
                $\{T\in\SpUHs(\realsNonNeg)\mid\Phi(T)\}$ is residual in $(\SpUHs(\realsNonNeg),\toplocSOT)$.
            \item\punktlabel{2}
                $\{T\in\SpUHs(\realsNonNeg)\mid\Phi(T)\}$ is residual in $(\SpUHs(\realsNonNeg),\toplocWOT)$.
            \item\punktlabel{3}
                $\{T\in\SpHs(\realsNonNeg)\mid\Phi(T)\}$ is residual in $(\SpHs(\realsNonNeg),\toplocWOT)$.
        \end{kompaktenum}

    \continueparagraph
    are equivalent,
    where $\Phi(\cdot)$ is any property defined on $\C0$-semigroups over $\HilbertRaum$.
\end{prop}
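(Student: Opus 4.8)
The plan is to establish the two equivalences \textup{(i)}$\Leftrightarrow$\textup{(ii)} and \textup{(ii)}$\Leftrightarrow$\textup{(iii)} separately. The first is immediate: by \Cref{prop:spHs-and-spUHs-polish:sig:article-simpl-wk-problem-raj-dahya} the $\toplocSOT$- and $\toplocWOT$-topologies on $\SpUHs(\realsNonNeg)$ are not merely homeomorphic but literally the same topology on the same underlying set. As residuality of a fixed subset depends only on the ambient topology, the set $\{T\in\SpUHs(\realsNonNeg)\mid\Phi(T)\}$ is residual in $(\SpUHs(\realsNonNeg),\toplocSOT)$ precisely when it is residual in $(\SpUHs(\realsNonNeg),\toplocWOT)$.

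For \textup{(ii)}$\Leftrightarrow$\textup{(iii)} I would first record two elementary facts valid for any dense subspace $D$ of any topological space $X$, both consequences of the single observation that a set $N$ is nowhere dense in $D$ if and only if it is nowhere dense in $X$ (using $\overline{N}^{X}\cap D=\overline{N}^{D}$ together with the fact that, by density, every nonempty $X$-open set meets $D$ in a nonempty $D$-open set): \textbf{(A)} if $M\subseteq X$ is meager in $X$ then $M\cap D$ is meager in $D$; and \textbf{(B)} if $M\subseteq D$ is meager in $D$ then $M$ is meager in $X$.

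I would then apply these with $X\colonequals(\SpHs(\realsNonNeg),\toplocWOT)$ and $D\colonequals\SpUHs(\realsNonNeg)$, which is dense in $X$ by \Cref{lemm:density-results-summarised:sec:density-results:sig:article-simpl-wk-problem-raj-dahya}. Writing $R_{C}\colonequals\{T\in\SpHs(\realsNonNeg)\mid\Phi(T)\}$ and $R_{U}\colonequals\{T\in\SpUHs(\realsNonNeg)\mid\Phi(T)\}$, one has $R_{U}=R_{C}\cap D$, hence $D\setminus R_{U}=(X\setminus R_{C})\cap D$ and $X\setminus R_{C}=(X\setminus D)\cup(D\setminus R_{U})$. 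For \textup{(iii)}$\Rightarrow$\textup{(ii)}: if $R_{C}$ is residual, then $X\setminus R_{C}$ is meager in $X$, so by \textbf{(A)} the set $D\setminus R_{U}=(X\setminus R_{C})\cap D$ is meager in $D$, i.e.\ $R_{U}$ is residual in $D$; this direction uses only density. For \textup{(ii)}$\Rightarrow$\textup{(iii)}: if $R_{U}$ is residual in $D$, then $D\setminus R_{U}$ is meager in $D$, hence meager in $X$ by \textbf{(B)}; adjoining the set $X\setminus D$ then exhibits $X\setminus R_{C}$ as a union of two meager sets, so $R_{C}$ is residual.

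The main obstacle is precisely this last step: the implication \textup{(ii)}$\Rightarrow$\textup{(iii)} requires $\SpUHs(\realsNonNeg)$ to be \emph{comeager}, not merely dense, in $(\SpHs(\realsNonNeg),\toplocWOT)$, so that $X\setminus D$ may be discarded as meager. I would supply this from Król's theorem, which (as recalled in the introduction) gives that $\SpUHs(\realsNonNeg)$ is a dense $G_{\delta}$-subset of $(\SpHs(\realsNonNeg),\toplocWOT)$, and a dense $G_{\delta}$ is by definition comeager. Alternatively, one may observe that $(\SpHs(\realsNonNeg),\toplocWOT)$ is metrizable, being a subspace of the Polish space $(\SpCw(\realsNonNeg),\toplocWOT)$, while $(\SpUHs(\realsNonNeg),\toplocWOT)$ is Polish by \Cref{prop:spHs-and-spUHs-polish:sig:article-simpl-wk-problem-raj-dahya}; a dense completely metrizable subspace of a metrizable space is automatically $G_{\delta}$ and thus comeager.
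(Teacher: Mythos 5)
Your proposal is correct and takes essentially the same route as the paper: the equivalence \textup{(i)}$\Leftrightarrow$\textup{(ii)} from the literal coincidence of the $\toplocSOT$- and $\toplocWOT$-topologies on $\SpUHs(\realsNonNeg)$ (\Cref{prop:spHs-and-spUHs-polish:sig:article-simpl-wk-problem-raj-dahya}), and \textup{(ii)}$\Leftrightarrow$\textup{(iii)} by showing $\SpUHs(\realsNonNeg)$ is comeagre in $(\SpHs(\realsNonNeg),\toplocWOT)$ --- dense by Kr\'ol and $G_{\delta}$ because a completely metrizable subspace of a metrizable space is $G_{\delta}$ (the paper's appeal to Alexandroff's lemma, \cite[Theorem~3.11]{kech1994}), with your facts \textbf{(A)}/\textbf{(B)} merely making explicit the meagre-set transfer that the paper leaves implicit. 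Two minor remarks: Kr\'ol's theorem itself yields only density, so crediting it with the $G_{\delta}$ property is a slight misattribution --- it is your ``alternative'' argument (Polishness of $\SpUHs(\realsNonNeg)$ plus metrizability of $\SpHs(\realsNonNeg)$ inside $\SpCw(\realsNonNeg)$) that actually carries this step, exactly as in the paper; and your observation that the equivalence needs only density and comeagreness of the subspace, not the Baire property of $(\SpHs(\realsNonNeg),\toplocWOT)$ (which the paper invokes from its main theorem), is a correct small streamlining.
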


    \begin{proof}
        The equivalence of \punktcref{1} and \punktcref{2}
        holds by the trivial fact that
            $(\SpUHs(\realsNonNeg),\toplocSOT)$
            and
            $(\SpUHs(\realsNonNeg),\toplocWOT)$
        coincide topologically
        (see \Cref{prop:spHs-and-spUHs-polish:sig:article-simpl-wk-problem-raj-dahya}).
        To show \punktcref{2} $\Leftrightarrow$ \punktcref{3},
        since we know by \Cref{thm:main-result:sig:article-simpl-wk-problem-raj-dahya} that
            $(\SpHs(\realsNonNeg),\toplocWOT)$
        is a Baire space, it suffices to show that
            $\SpUHs(\realsNonNeg)$
        is comeagre in
            $(\SpHs(\realsNonNeg),\toplocWOT)$.
        To this end, first consider the spaces

            \begin{mathe}[mc]{rcccccl}
                \SpUHs(\realsNonNeg)
                    &\subseteq
                        &\SpHs(\realsNonNeg)
                    &\subseteq
                        &\SpCs(\realsNonNeg)
                    &\subseteq
                        &\SpCw(\realsNonNeg),\\
            \end{mathe}

        \continueparagraph
        endowed with the $\toplocWOT$-topology.
        Now we know that the outer two spaces are completely metrisable
        (%
            see
            \Cref{prop:op-func-spaces-are-polish:basic-func:sig:article-simpl-wk-problem-raj-dahya}
            and
            \Cref{prop:spHs-and-spUHs-polish:sig:article-simpl-wk-problem-raj-dahya}%
        ).
        By Alexandroff's lemma (\cf \cite[Theorem~3.11]{kech1994}),
        it follows that $\SpUHs(\realsNonNeg)$ is a $G_{\delta}$-subset of
            $\SpCw(\realsNonNeg)$
        and thus of $\SpHs(\realsNonNeg)$.
        Finally, by Król's approximation result (\cf \cite[Theorem~2.1]{krol2009}),
        we know that $\SpUHs(\realsNonNeg)$ is $\toplocWOT$-dense in $\SpHs(\realsNonNeg)$.
        Thus $\SpUHs(\realsNonNeg)$ is a comeagre subset in $(\SpHs(\realsNonNeg),\toplocWOT)$,
        and the proof is complete.
    \end{proof}

The proof of \Cref{prop:residual-unitary-iff-contractive:sig:article-simpl-wk-problem-raj-dahya}
also reveals the following result:

\begin{cor}
\makelabel{cor:unitary-C0-residual-in-contractive-C0:sig:article-simpl-wk-problem-raj-dahya}
    Let $\HilbertRaum$ be a separable infinite dimensional space.
    Then the subspace of unitary $\C0$-semigroups over $\HilbertRaum$
    is residual in the space of contractive $\C0$-semigroups over $\HilbertRaum$
    under the $\toplocWOT$-topology.
    Moreover, this subspace is of second category.
\end{cor}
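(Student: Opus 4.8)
The plan is to leverage the comeagreness already established within the proof of \Cref{prop:residual-unitary-iff-contractive:sig:article-simpl-wk-problem-raj-dahya}, since residuality is by definition comeagreness. First I would observe that the first assertion---that the subspace of unitary $\C0$-semigroups is residual in $(\SpHs(\realsNonNeg),\toplocWOT)$---is precisely the statement that $\SpUHs(\realsNonNeg)$ is comeagre there. This was exactly what was shown en route to \Cref{prop:residual-unitary-iff-contractive:sig:article-simpl-wk-problem-raj-dahya}: by Alexandroff's lemma the subspace $\SpUHs(\realsNonNeg)$ is a $G_{\delta}$-set in the completely metrisable overspace $\SpCw(\realsNonNeg)$ and hence in $\SpHs(\realsNonNeg)$, while Król's approximation result furnishes its $\toplocWOT$-density in $\SpHs(\realsNonNeg)$. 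A dense $G_{\delta}$-set is a countable intersection of dense open sets and thus comeagre, so the first claim follows immediately.

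For the second-category claim, I would invoke the main theorem. By \Cref{thm:main-result:sig:article-simpl-wk-problem-raj-dahya} the space $(\SpHs(\realsNonNeg),\toplocWOT)$ is a Baire space, so the ambient space is of second category (nonmeagre) in itself. The key step is then the elementary topological observation that a comeagre subset of a nonmeagre space is itself nonmeagre: if $\SpUHs(\realsNonNeg)$ were meagre, then together with its complement---which is meagre, being the complement of a comeagre set---the whole space $\SpHs(\realsNonNeg)$ would be a union of two meagre sets and hence meagre, contradicting the Baire property. Therefore $\SpUHs(\realsNonNeg)$ is of second category.

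I do not anticipate any genuine obstacle here, since all of the substantive work---the $G_{\delta}$ and density facts, and the Baire property of the overspace---has already been carried out in \Cref{prop:residual-unitary-iff-contractive:sig:article-simpl-wk-problem-raj-dahya} and \Cref{thm:main-result:sig:article-simpl-wk-problem-raj-dahya}. The only care required is the bookkeeping distinction between the notions of \emph{comeagre}/\emph{residual} on the one hand and \emph{second category} on the other, together with the elementary fact that comeagreness upgrades to second category precisely because the ambient space is Baire. Consequently the corollary is essentially a direct readout of the two earlier results rather than a fresh argument.
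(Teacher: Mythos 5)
Your proposal is correct and takes essentially the same route as the paper: the paper gives no separate proof for this corollary, stating only that it is ``revealed'' by the proof of \Cref{prop:residual-unitary-iff-contractive:sig:article-simpl-wk-problem-raj-dahya}, which is precisely the dense-$G_{\delta}$ (Alexandroff plus Kr\'ol) comeagreness argument you cite. Your explicit upgrade from comeagre to second category via the Baire property of \Cref{thm:main-result:sig:article-simpl-wk-problem-raj-dahya} is exactly the intended (implicit) reasoning.
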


By \Cref{prop:residual-unitary-iff-contractive:sig:article-simpl-wk-problem-raj-dahya},
the main application of our main result, is that
studying residual properties on the space of contractive $\C0$-semigroups
under the $\toplocWOT$-topology
is equivalent to
studying the same properties on the space of unitary $\C0$-semigroups
under the $\toplocSOT$-topology.
For example, when considering rigidity phenomena for semigroups
(see \cite[\S{}IV.3]{eisner2010buchStableOpAndSemigroups}),
we obtain \Cref{thm:application-rigidity:sig:article-simpl-wk-problem-raj-dahya}.
This is proved as follows:

    \begin{proof}[of \Cref{thm:application-rigidity:sig:article-simpl-wk-problem-raj-dahya}]
        The residuality result is known to hold
        for the unitary case under the $\toplocSOT$-topology
        (\cf \cite[Theorem~IV.3.20]{eisner2010buchStableOpAndSemigroups}).
        By
            \Cref{prop:residual-unitary-iff-contractive:sig:article-simpl-wk-problem-raj-dahya}%
            ~\eqcref{it:1:prop:residual-unitary-iff-contractive:sig:article-simpl-wk-problem-raj-dahya}%
            ~$\Longrightarrow$%
            ~\eqcref{it:3:prop:residual-unitary-iff-contractive:sig:article-simpl-wk-problem-raj-dahya},
        the residuality result can be transferred to the contractive case.
    \end{proof}

\begin{rem}
    The claims in \Cref{thm:application-rigidity:sig:article-simpl-wk-problem-raj-dahya}
    hold for the unitary case under both the
        $\toplocSOT$- and $\toplocWOT$-topologies,
    since these coincide.
    In the contractive case, not only are these topologies distinct,
    but the claim fails under the $\toplocSOT$-topology.
    To see this, consider the collection, ${\cal{S}\subseteq\SpHs(\realsNonNeg)}$,
    of semigroups unitarily equivalent to the \emph{backwards unilateral shift semigroup} on $\Lspace^{2}(\realsNonNeg^{2})$
    (\cf \cite[Definition~5.16]{eisnermaitrai2010typicalOperators}).
    Since $\cal{S}$ is $\toplocSOT$-residual in $\SpHs(\realsNonNeg)$
    (\cf \cite[Theorem~5.17]{eisnermaitrai2010typicalOperators})
    and each $S\in\cal{S}$ strongly converges asymptotically to the zero-operator,
    it follows that property \eqcref{it:2:thm:application-rigidity:sig:article-simpl-wk-problem-raj-dahya}
    in the statement of \Cref{thm:application-rigidity:sig:article-simpl-wk-problem-raj-dahya}
    does not hold residually \wrt the $\toplocSOT$-topology.
\end{rem}

These applications ultimately highlight that,
in the case of one-parameter $\C0$-semigroups under the $\toplocWOT$-topology,
there is no gain by working with contractive instead of unitary semigroups,
in terms of studying generic properties of semigroups.
Differences however do occur, when working under the $\toplocSOT$-topology.


\subsection*{Acknowledgement}

\noindent
The author is grateful to Tanja Eisner for her encouragement to look into this problem,
and also to the referee for their constructive feedback,
which considerably helped improve the presentation of this paper.

\bibliographystyle{abbrv}
\def\bibname{References}
\bgroup
\footnotesize

\egroup

\addresseshere
\end{document}